\documentclass[12pt,a4paper,reqno]{amsart}
\usepackage[T1]{fontenc}
\usepackage[marginratio=1:1,totalwidth=15.75cm,totalheight=22.275cm]{geometry}
\usepackage[utf8]{inputenc}
\usepackage{amsmath,amssymb,amsthm}
\usepackage[alphabetic,nobysame]{amsrefs}

\numberwithin{equation}{section}

\usepackage[shortlabels]{enumitem}
\usepackage[symbol,perpage]{footmisc}
\usepackage{hyperref}
\usepackage{color}

\usepackage{fixdif}
\newdif*{\Deriv}{\mathrm{D}}

\newcommand{\deriv}{\d}

\newcommand{\im}{\operatorname{Im}}

\newcommand{\discF}{\tilde{\mathcal{F}}}
\newcommand{\discL}{\tilde{\mathcal{L}}}
\newcommand{\downF}{\mathcal{F}}
\newcommand{\downL}{\mathcal{L}}
\newcommand{\limitF}{\mathcal{F}}

\renewcommand{\phi}{\varphi}

\newcommand{\cl}{\operatorname{cl}}



\newcommand{\eps}{\varepsilon}

\newcommand*{\defeq}{\mathrel{\vcenter{\baselineskip0.5ex \lineskiplimit0pt
                     \hbox{\scriptsize.}\hbox{\scriptsize.}}}%
                     =}
\newcommand{\eqdef}{=\mathrel{\vcenter{\baselineskip0.5ex \lineskiplimit0pt
                     \hbox{\scriptsize.}\hbox{\scriptsize.}}}}

\newcommand{\RR}{\mathbb{R}}
\def\Cx{\mathbb{C}}
\def\Chat{\widehat{\mathbb{C}}}

\def\diam{\operatorname{diam}}
\def\intr{\operatorname{int}}

\newcommand{\Aut}{\operatorname{Aut}}
\newcommand{\id}{\operatorname{id}}

\DeclareMathOperator{\Mod}{mod}
\DeclareMathOperator{\inj}{inj}

\newcommand{\DD}{\mathbb{D}}
\DeclareMathOperator{\out}{out}

\theoremstyle{plain}
\newtheorem{thm}{Theorem}[section]
\newtheorem{lemma}[thm]{Lemma}
\newtheorem{cor}[thm]{Corollary}
\newtheorem{prop}[thm]{Proposition}
\newtheorem{thmout}{Theorem}

\theoremstyle{definition}
\newtheorem{defi}[thm]{Definition}

\newtheorem*{rmk}{Remark}

\usepackage{orcidlink}


\title{Classifying multiply connected wandering domains}
\author[G. R. Ferreira]{Gustavo R. Ferreira\,\orcidlink{0000-0002-7330-0018}}
\address{Centre de Recerca Matem\`atica, Barcelona, Spain}
\email{grodrigues@crm.cat}
\author[L. Rempe]{Lasse Rempe\,\orcidlink{0000-0001-8032-8580}}
\address{Department of Mathematics\\ University of Manchester\\ Manchester M13 9PL\\ United Kingdom} 
\email{lasse.rempe@manchester.ac.uk}

\date{\today}

\thanks{This research was partly conducted when both authors worked at the University of Liverpool; support by the University for this research, and in particular for funding for a post-doctoral position of the first author, is gratefully acknowledged. The first author also acknowledges financial support from the Spanish State Research Agency through the Mar\'ia de Maeztu Program for Centers and Units of Excellence in R\&D (CEX2020-001084-M) and from the European Commission Horizon Europe research and innovation programme under the Marie Skłodowska-Curie Grant Agreement No. 101208256. Finally, the authors would like to thank the Isaac Newton Institute for Mathematical Sciences, Cambridge, for support and hospitality during the programme \emph{Complex dynamics: a century on from Fatou and Julia}, where work on this paper was undertaken. This work was supported by EPSRC grant EP/Z000580/1.}

\date{\today}
\subjclass[2020]{37F10 (primary); 30D05, 30F45 (secondary)}

\begin{document}
\begin{abstract}
We study the internal dynamics of multiply connected wandering domains of meromorphic functions. We do so by considering the sequence of injectivity radii along the orbit of a base point, together with the hyperbolic distortions along the same orbit. The latter sequence has previously been used in this context; the former introduces geometric information about the shape of the wandering domains that interacts with the dynamical information given by the hyperbolic distortions. Using this idea, we complete the description of the internal dynamics of any wandering domain of a meromorphic function, and also unify previous approaches to the question. We conclude that the internal dynamics of a wandering domain, from the point of view of hyperbolic geometry, can be classified into six different types. Five of these types were previously known and are realised by wandering domains of entire functions. The sixth type  arises only for meromorphic functions: a locally but not globally eventually isometric wandering domain. We construct a meromorphic function with such a domain, demonstrating that this new phenomenon does in fact occur. Our results show that, on a local level, the  dynamical behaviour of wandering domains of meromorphic functions is similar to that which occurs of entire functions. However, new global phenomena can arise in the non-entire case. 

In particular, we identify a category of multiply connected wandering domains~-- essentially thin and infinitesimally non-contracting domains~-- that naturally generalise multiply connected wandering domains of entire functions, and share many characteristics with these. We generalise a number of results of Bergweiler, Rippon and Stallard for the entire case to this more general setting. In particular, we show the existence of dynamically meaningful singular foliations provided by level lines of certain positive harmonic functions. 
 \end{abstract}
\maketitle

\section{Introduction}
Let $f\colon\Cx\to\Chat$ be a meromorphic function, where $\Chat = \Cx\cup\{\infty\}$ denotes the Riemann sphere. The \textit{Fatou set} of $f$, denoted $F(f)$, is defined as the set of points $z\in\Cx$ such that, on some neighbourhood of $z$, the iterates $(f^n)_{n=0}^{\infty}$ are defined and form a normal family in the sense of Montel. In other words, it is the set where the dynamics is not sensitive to initial conditions -- nearby points have similar long-term behaviour. The complement of the Fatou set, the \emph{Julia set} $J(f)$, is the locus of ``chaotic'' dynamics. Both sets are completely invariant under $f$, meaning that $f^{-1}(F(f))\subset F(f)$ and $f(F(f))\subset F(f)$, and analogously for the Julia set (with special care taken for the point at infinity). In particular, if $U$ is a connected component of the Fatou set~-- henceforth a \textit{Fatou component}~-- then for all $n\geq 0$, $f^n(U)$ is contained in some Fatou component $U_n$ (where $U_0 = U$). If $U_n \neq U_m$ for all $n\neq m$, we say that $U$ is a \textit{wandering domain}.

The dynamical behaviour of wandering domains of meromorphic functions is an area of intense research. Recent efforts \cites{BEFRS19,Fer22,Fer24} to classify their internal dynamics have focused on the behaviour of the hyperbolic metric -- something intrinsic to the domains themselves. More precisely, let $f\colon\Cx\to\Chat$ be a meromorphic function with a wandering domain $U$, and let $z$ and $w$ be distinct points in $U$. The primary question of this line of research has been: what is the long-term behaviour of the sequence
\begin{equation}\label{eqn:distancesequence} \left(d_{U_n}\left(f^n(z), f^n(w)\right)\right)_{n=0}^{\infty}, \end{equation}
where $d_\Omega$ denotes the hyperbolic distance in the domain $\Omega\subset\Chat$? By the Schwarz--Pick theorem (see Theorem \ref{thm:SP}), this sequence 
of non-negative numbers is non-increasing, and hence tends to a limit $c(z, w)$. The next questions, then, are whether this limit is zero or positive, and whether it is achieved. These were analysed by Benini \textit{et al.} \cite{BEFRS19} in the case when all the domains $U_n$ are simply connected, and by the first author \cites{Fer22,Fer24} for several cases of multiply connected wandering domains. The purpose of this paper is to provide a framework that both unifies the existing progress and completes the classification of wandering domains, enabling us to tackle previously unaddressed types of wandering domains. 

Let $U$ be a wandering domain of a meromorphic function $f$, and let $z\in U$. The sequence of \textit{hyperbolic distortions} of $f$ along the orbit of $z$ is the sequence $(\lambda_n(z))_{n=0}^{\infty}$, where
\begin{equation}\label{eq:distortionseq}
    \lambda_n(z) = \|\Deriv f\left(f^{n}(z)\right)\|_{U_{n}}^{U_{n+1}};
\end{equation}
see~\eqref{eqn:hypdistortion} for the definition of hyperbolic distortion. This object was first introduced in \cite{BEFRS19} in the context of simply connected wandering domains, where it plays an important role in the study of internal dynamics. It will be important for us as well, and we start by showing that it behaves in the same way at all points in the domain. This claim can be deduced from the results of \cite{BEFRS19} and \cite{Fer24}, but we give here a direct proof of a stronger result that does not rely on previous knowledge of internal dynamics of wandering domains.

\begin{prop}\label{prop:trich}
Let $f\colon\Cx\to\Chat$ be a meromorphic function with a wandering domain $U$. Then, for any $z, w\in U$ and all $n\geq 0$,
\[ e^{-2d_U(z,w)} (1 - \lambda_n(w)) \leq 1 - \lambda_n(z) \leq 
e^{2d_U(z,w)}(1 - \lambda_n(w)). \]
In particular, whether the series
\[ \sum_{n=0}^{\infty} (1 - \lambda_n(z)) \]
converges, diverges, or has a finite number of non-zero terms does not depend on the choice of $z\in U$.
\end{prop}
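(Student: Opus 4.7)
The plan is to establish the pointwise Lipschitz-type estimate
\[ e^{-2 d_V(p,q)}(1-\mu(q)) \leq 1 - \mu(p) \leq e^{2 d_V(p,q)}(1-\mu(q)) \]
for any holomorphic map $g\colon V\to V'$ between hyperbolic domains, where $\mu(\zeta) \defeq \|Dg(\zeta)\|_V^{V'}$. The proposition's first inequality then follows by applying this with $g = f\colon U_{n-1}\to U_n$, using the Schwarz--Pick estimate $d_{U_{n-1}}(f^{n-1}(z), f^{n-1}(w)) \leq d_U(z,w)$ for $f^{n-1}\colon U\to U_{n-1}$ and the identity $\lambda_n(z) = \mu(f^{n-1}(z))$.

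To prove the pointwise estimate, after noting it is trivial if $\mu\equiv 1$, I would first lift $g$ to a holomorphic map $\tilde g\colon\DD\to\DD$ between universal covers. The hyperbolic distortion is conformally invariant, so $\mu(p) = \mu_{\tilde g}(\tilde p)$ for any lift $\tilde p$ of $p$; and given $p,q\in V$, lifts can be chosen so that $d_\DD(\tilde p,\tilde q) = d_V(p,q)$. By the strict Schwarz--Pick lemma, either $\tilde g$ is an automorphism of $\DD$ and $\mu\equiv 1$, or $\mu_{\tilde g}<1$ everywhere. Assuming the latter, the estimate taken in logarithms is equivalent to the assertion that $h \defeq \log(1-\mu_{\tilde g})$ is $2$-Lipschitz with respect to the hyperbolic metric on $\DD$. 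By conformal invariance of $\mu_{\tilde g}$ under automorphisms of $\DD$, it suffices to bound $|\nabla_{\mathrm{hyp}} h|(0) \leq 2$, after pre- and post-composing $\tilde g$ with automorphisms of $\DD$ so that $\tilde g(0) = 0$ and $\tilde g'(0) = \mu_{\tilde g}(0) \eqdef \mu_0 \in [0,1)$.

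A direct computation from the formula $\mu_{\tilde g}(\zeta) = |\tilde g'(\zeta)|(1-|\zeta|^2)/(1-|\tilde g(\zeta)|^2)$, evaluated at the normalized origin, yields $\partial_\zeta \mu_{\tilde g}(0) = \tilde g''(0)/2$, so that
\[ |\nabla_{\mathrm{hyp}} h|(0) = \frac{|\tilde g''(0)|}{2(1-\mu_0)}. \]
The heart of the argument, and the step I expect to be the main obstacle, is bounding $|\tilde g''(0)|$, for which I would apply the Schwarz--Pick lemma to the auxiliary function $G(\zeta) \defeq \tilde g(\zeta)/\zeta$. By Schwarz's lemma and the maximum principle (using that $\tilde g$ is not a Möbius transformation), $G$ is a holomorphic self-map of $\DD$ with $G(0) = \tilde g'(0) = \mu_0$ and $G'(0) = \tilde g''(0)/2$, so Schwarz--Pick at $0$ gives $|\tilde g''(0)|/2 \leq 1 - \mu_0^2$. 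Substituting back produces $|\nabla_{\mathrm{hyp}} h|(0) \leq 1+\mu_0 \leq 2$, establishing the Lipschitz bound. The ``in particular'' clause of the proposition then follows immediately: convergence or divergence of $\sum_n (1-\lambda_n(z))$ is preserved under multiplication by the uniform factor $e^{\pm 2d_U(z,w)}$, and the inequality forces $\lambda_n(z) = 1$ if and only if $\lambda_n(w) = 1$.
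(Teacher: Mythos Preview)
Your proposal is correct and essentially self-contained, but it takes a different route from the paper's proof. The paper first establishes the same pointwise estimate (its Lemma~3.2) by combining two ingredients: the explicit formula for $d_\DD$ on a real segment, which gives
\[ 1-\lambda(z) \leq e^{d_\DD(\lambda(z),\lambda(w))}(1-\lambda(w)), \]
and the Beardon--Minda Schwarz--Pick inequality for the hyperbolic distortion, $d_\DD(\lambda(z),\lambda(w)) \leq 2d_X(z,w)$, which is quoted as a black box from~\cite{BM06}. Your argument instead proves directly that $\log(1-\mu_{\tilde g})$ is $2$-Lipschitz in the hyperbolic metric via a gradient bound, deriving the needed second-derivative estimate $|\tilde g''(0)|/2 \leq 1-\mu_0^2$ from Schwarz--Pick applied to $\tilde g(\zeta)/\zeta$. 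This is in effect an infinitesimal re-derivation of the Beardon--Minda inequality, so the two proofs are cousins; yours trades a citation for an explicit computation. One small point you should tidy up: your gradient computation of $\mu_{\tilde g}$ at the origin uses differentiability of $|\tilde g'|$, which fails when $\mu_0 = 0$ (i.e., at critical points of $\tilde g$); since these are isolated and $h$ is continuous, the Lipschitz bound extends by continuity, but this deserves a sentence.
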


This means that every wandering domain exhibits one of three types of 
behaviour:
\begin{defi}\label{def:inf}
Let $f\colon\Cx\to\Chat$ be a meromorphic function with a wandering domain~$U$.
\begin{enumerate}[(i)]
    \item If $\sum_{n=0}^{\infty}\left(1 - \lambda_n(z)\right) = +\infty$ for all $z\in U$, we say that $U$ is \emph{infinitesimally contracting}.
    \item If $\sum_{n=0}^{\infty}\left(1 - \lambda_n(z)\right) < +\infty$ and $\lambda_n(z) < 1$ for infinitely many values of $n$ for every $z\in U$, we say that $U$ is \emph{infinitesimally semi-contracting}.
    \item If there exists $N\geq 0$ such that $\lambda_n(z) = 1$ for all $n\geq N$ and every $z\in U$, we say that $U$ is \emph{infinitesimally eventually isometric}.
\end{enumerate}
\end{defi}

Let us take a moment to discuss the significance of the series 
  \begin{equation}\label{eqn:distortionsum}
     \sum_{n=0}^{\infty}\left(1 - \lambda_n(z)\right).\end{equation}
 For any $z\in U$ and $k\geq 1$, the chain rule implies that
\begin{equation}\label{eqn:chainrule} \|\Deriv f^k(z)\|_U^{U_k} = \prod_{n=1}^k \|\Deriv f\left(f^{n-1}(z)\right)\|_{U_{n-1}}^{U_n}. \end{equation}
Since the hyperbolic distortion is always in the interval $[0, 1]$, the product on the right-hand side necessarily converges to a non-negative number as $k\to+\infty$. If $f^n(z)$ is not a critical point of $f$ for any $n\geq 0$, then this limit is non-zero if and only if the sum $\sum_{n\geq 1}\left(1 - \lambda_n(z)\right)$ is convergent. So the classification in Definition~\ref{def:inf} is really about the behaviour
of the hyperbolic derivative $\| \Deriv f^k(z)\|_U^{U_k}$ for all but a countable set of $z$: if $U$ is infinitesimally contracting, this limit tends to $0$, if it is infinitesimally eventually isometric, the sequence is eventually constant, and in the infinitesimally semi-contracting case, it converges to, but is not eventually  equal to, a non-zero limit. Stating Proposition~\ref{prop:trich} and Definition \ref{def:inf} in terms of 
 the sum~\eqref{eqn:distortionsum} allows us to state the definition
  without having to exclude iterated preimages of critical points.

Using the terminology from Definition~\ref{def:inf}, we can restate a key result from \cite{BEFRS19} as follows.
\begin{thmout}[\cite{BEFRS19}, Theorem A]\label{thmout:sc}
Let $f\colon\Cx\to\Cx$ be an entire function with a simply connected wandering domain $U$. Define the set
 \[ E \defeq \{(z, w)\in U\times U\colon \text{$f^k(z) = f^k(w)$ for some $k\geq 0$}\}. \]
\begin{enumerate}[(i)]
    \item If $U$ is infinitesimally contracting, then $d_{U_n}\left(f^n(z), f^n(w)\right)\to 0$ for all $z$ and $w$ in $U$;\label{item:BEFRScontr}
    \item If $U$ is infinitesimally semi-contracting, then
      for all $(z, w)\in (U\times U)\setminus E$, there is $c(z,w)>0$ such that
        $d_{U_n}\left(f^n(z), f^n(w)\right)\to c(z, w) > 0$ but $d_{U_n}\left(f^n(z), f^n(w)\right)\neq c(z, w)$ for all $n$;
    \item If $U$ is infinitesimally eventually isometric, then there exists $N\geq 0$ such that, for all $(z,w)\in (U\times U)\setminus E$, there is $c(z,w)>0$ with
      $d_{U_n}\left(f^n(z), f^n(w)\right) = c(z, w)$ for $n\geq N$.\label{item:BEFRSisometric}
\end{enumerate}
\end{thmout}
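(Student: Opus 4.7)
The proof splits into the three cases of Definition~\ref{def:inf}. Case~(iii) (eventually isometric) is the cleanest. Under the hypothesis $\lambda_n(z)\equiv 1$ on $U$ for all $n\geq N$, the holomorphic map $f\colon U_{n-1}\to U_n$ has hyperbolic distortion identically $1$; lifting through Riemann maps (both domains being simply connected) yields a holomorphic $F\colon\DD\to\DD$ with $|F'(\zeta)|_{\DD}\equiv 1$, which by the equality case of Schwarz's lemma must be a Möbius automorphism. Hence $f\colon U_{n-1}\to U_n$ is itself a biholomorphism and a hyperbolic isometry for $n\geq N$, giving
\[ d_{U_n}\bigl(f^n(z),f^n(w)\bigr) \;=\; d_{U_N}\bigl(f^N(z),f^N(w)\bigr) \eqdef c(z,w) \]
for $n\geq N$, positive precisely when $(z,w)\notin E$.

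For case~(i), set $d_n\defeq d_{U_n}(f^n(z),f^n(w))$, which is non-increasing by Schwarz--Pick. Integrating the distortion $\|Df\|_{U_{n-1}}^{U_n}$ along the geodesic of length $d_{n-1}$ in $U_{n-1}$ between $f^{n-1}(z)$ and $f^{n-1}(w)$, and using the analogue of Proposition~\ref{prop:trich} in $U_{n-1}$ (applied to $f\colon U_{n-1}\to U_n$) to bound the distortion along that geodesic in terms of $\lambda_n(z)$, I obtain
\[ d_n \;\leq\; d_{n-1}\bigl(1-e^{-2d_0}(1-\lambda_n(z))\bigr). \]
Setting $\delta\defeq\inf_n d_n$ and telescoping yields $\delta\,e^{-2d_0}\sum_{n\geq 1}(1-\lambda_n(z))\leq\sum_{n\geq 1}(d_{n-1}-d_n)\leq d_0$, so divergence of the series in case~(i) forces $\delta=0$; that is, $d_n\to 0$.

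Case~(ii) is the subtlest. The strict inequality $d_n>\lim_m d_m$ for all $n$ follows from Schwarz--Pick rigidity: if $\lambda_n(z)<1$ then $f\colon U_{n-1}\to U_n$ is not a hyperbolic isometry, and since $f^{n-1}(z)\neq f^{n-1}(w)$ for $(z,w)\notin E$ this gives $d_n<d_{n-1}$ strictly, which happens infinitely often by hypothesis. To prove $c(z,w)\defeq\lim d_n$ is positive, my plan is to pass to the disk: choose Riemann maps $\pi_n\colon\DD\to U_n$ with $\pi_n(0)=f^n(z)$, set $w_n\defeq\pi_n^{-1}(f^n(w))$ so that $|w_n|=\tanh(d_n/2)>0$, and study the lifts $g_n\defeq\pi_{n+1}^{-1}\circ f\circ\pi_n\colon\DD\to\DD$, which satisfy $g_n(0)=0$, $|g_n'(0)|=\lambda_{n+1}(z)$, and $w_{n+1}=g_n(w_n)$. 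The sequence $|w_n|$ is non-increasing by Schwarz applied to $g_n$. For $N_0$ large enough that $\lambda_{n+1}(z)>|w_{N_0}|$ for all $n\geq N_0$, Schwarz's lemma applied to $g_n(\zeta)/\zeta$ gives the two-sided estimate
\[ \frac{\lambda_{n+1}(z)-|w_n|}{1-\lambda_{n+1}(z)|w_n|} \;\leq\; \frac{|w_{n+1}|}{|w_n|} \;\leq\; \frac{\lambda_{n+1}(z)+|w_n|}{1+\lambda_{n+1}(z)|w_n|}. \]
Taking logarithms and summing, and using the summability of $(1-\lambda_n(z))$ to absorb the nonlinear corrections, forces $\log|w_N|$ to stay bounded below, so $|w_n|$ is bounded below by a positive constant.

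The main obstacle is the potential circularity in case~(ii): both ratio bounds approach $\lambda_{n+1}(z)$ only when $|w_n|$ is small, yet it is a positive lower bound on $|w_n|$ that we seek. This is resolved by starting the iteration at an index $N_0$ beyond which $\lambda_n(z)$ is close enough to $1$ that the summability of $(1-\lambda_n(z))$ dominates the $|w_n|$-dependent terms in the log expansion uniformly in the starting value $|w_{N_0}|\in(0,1)$, combined with the a priori bound $|w_n|\leq|w_{N_0}|$ from monotonicity.
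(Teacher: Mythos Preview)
This theorem is quoted from~\cite{BEFRS19} and is not proved in the present paper; there is therefore no ``paper's own proof'' to compare against directly. Your argument is essentially correct, and I will briefly compare it with the methodology the paper uses for the analogous claims in the multiply connected setting.

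For case~(iii) your argument is the standard one and coincides with what anyone would do: hyperbolic distortion identically~$1$ forces the lift to be a rotation, hence $f\colon U_{n-1}\to U_n$ is a conformal isomorphism.

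For case~(i), your approach~--- integrating the pointwise distortion along a hyperbolic geodesic and invoking Lemma~\ref{lem:boundHD} to get $d_n\le d_{n-1}\bigl(1-e^{-2d_0}(1-\lambda_n(z))\bigr)$~--- is correct and more elementary than the route taken in the paper for the general (not necessarily simply connected) statement, namely Proposition~\ref{prop:contracting}. There the authors pass to the universal cover, form the compositions $G_n=g_n\circ\dots\circ g_0$, observe that $G_n'(0)\to 0$, and conclude $G_n\to 0$ by normality; your telescoping estimate bypasses this normal-families step entirely and works directly in the domains $U_n$.

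For case~(ii), your argument is again correct, though the ``circularity'' worry in your final paragraph is more easily dispatched than you suggest: since $|w_n|\le|w_0|$ for all~$n$, one may simply fix $R=|w_0|$, choose $N_0$ with $\lambda_{n+1}(z)>R$ for $n\ge N_0$, and use monotonicity of $r\mapsto(\lambda-r)/(1-\lambda r)$ to get $|w_{n+1}|/|w_n|\ge 1-\tfrac{1+R}{1-R}(1-\lambda_{n+1}(z))$, whose logarithms are summable. The paper (and~\cite{BEFRS19}) instead argues via convergence of $G_n$ to a non-constant limit $G$ with $G(0)=0$; positivity of $c(z,w)$ then amounts to $G(\tilde w)\neq 0$, which in the simply connected case follows because each $g_n$ satisfies the strict Schwarz inequality only at~$0$. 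Your hands-on estimate yields the same conclusion without appealing to limit functions, at the cost of a slightly more intricate bookkeeping. One small point: your justification that $d_n<d_{n-1}$ whenever $\lambda_n(z)<1$ should explicitly invoke the equality case of Schwarz's lemma for the lift $g_{n-1}$ (a self-map of~$\DD$ fixing~$0$), not merely that $f$ fails to be an isometry, since strict contraction between a specific pair of points is what is needed.
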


Benini \textit{et al.} called the behaviours described in Theorem \ref{thmout:sc}\ref{item:BEFRScontr}-\ref{item:BEFRSisometric} \textit{contracting}, \textit{semi-contracting}, and \textit{eventually isometric} (respectively). When we wish to emphasise the distinction with the notions from Definition~\ref{def:inf}, we shall refer to this as \emph{globally} contracting, semi-contracting, or eventually isometric behaviour. Theorem~\ref{thmout:sc} also holds for simply connected wandering domains of meromorphic functions, as long as one assumes that all iterates of the domain are simply connected. From  a hyperbolic-geometric perspective, this completely describes the internal dynamics of simply connected wandering domains: \textit{``if a simply connected wandering domain $U$ has only simply connected iterates and is infinitesimally contracting (resp.\ semi-contracting, eventually isometric), then it is globally contracting (resp.\ semi-contracting, eventually isometric)''}.

Because of the complex interplay between the mapping properties of the function and the topology and geometry of the domains, the above maxim fails for multiply connected wandering domains. Indeed, it was observed in \cite{Fer22} that such a wandering domain may also be~\emph{bimodal}~-- i.e., there are pairs $(z,w)\notin E$ such that 
 the sequence~\eqref{eqn:distancesequence} converges to $0$, while for others it tends nontrivially to a non-zero limit~-- or \emph{trimodal}~-- the sequence
 may exhibit all three of the behaviours listed in Theorem~\ref{thmout:sc}. In order to provide a unified explanation for the internal dynamics of wandering domains, we introduce the following definition.

\begin{defi}\label{def:thickthin}
Let $f$ be a meromorphic function with a wandering domain $U$. For $z\in U$ and $n\geq 0$, let $\delta_n(z)$ denote the injectivity radius of $U_n$ at $f^n(z)$ (see Definition~\ref{def:inj}). We say that $U$ is \textit{essentially thin} if there exists $z_0\in U$ and an increasing sequence $(n_k)_{k=0}^{\infty}$ such that $\delta_{n_k}(z_0)\to 0$ as $k\to +\infty$. A wandering domain that is not essentially thin is said to be \textit{essentially thick}.
\end{defi}

Let us reassure ourselves that this definition does not depend on the choice of $z_0\in U$.

\begin{prop}\label{prop:injglobal}
Let $f$ be a meromorphic function with a wandering domain $U$, and let $z_0\in U$. If $(n_k)_{k=0}^{\infty}$ is a sequence such that $\delta_{n_k}(z_0)\to 0$, then $\delta_{n_k}(z)\to 0$ for all $z\in U$.
\end{prop}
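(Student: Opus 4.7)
The plan is to use Schwarz--Pick to reduce the statement to a fact in pure hyperbolic geometry, which I then prove by working in the universal cover. Applied inductively to $f\colon U_{n-1}\to U_n$, the Schwarz--Pick lemma gives $d_{U_n}(f^n(z),f^n(z_0))\le D\defeq d_U(z,z_0)<\infty$ for every $n$. It therefore suffices to establish the following auxiliary claim: for each $D>0$ there exist $C=C(D)>0$ and $\varepsilon_0=\varepsilon_0(D)>0$ such that, whenever $S$ is a hyperbolic Riemann surface and $p,q\in S$ satisfy $d_S(p,q)\le D$ and $\inj_S(p)<\varepsilon_0$, one has $\inj_S(q)\le C\cdot\inj_S(p)$. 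Applied to $S=U_{n_k}$, $p=f^{n_k}(z_0)$ and $q=f^{n_k}(z)$, this immediately gives $\delta_{n_k}(z)\to 0$.

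To prove the auxiliary claim, I would pass to the universal cover $\pi\colon\DD\to S$ with torsion-free deck group $\Gamma$, and pick lifts $\tilde p,\tilde q\in\DD$ with $d_{\DD}(\tilde p,\tilde q)\le D$. Let $\gamma\in\Gamma\setminus\{\operatorname{id}\}$ realise the injectivity radius at $p$, so that $d_{\DD}(\tilde p,\gamma\tilde p)=2\inj_S(p)$; the aim is to control $d_{\DD}(\tilde q,\gamma\tilde q)$, since $\inj_S(q)\le\tfrac{1}{2}d_{\DD}(\tilde q,\gamma\tilde q)$. Because $\Gamma$ acts freely, $\gamma$ is either hyperbolic or parabolic. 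In the hyperbolic case, with $L=\operatorname{axis}(\gamma)$ and $\ell=\ell(\gamma)$, the classical identity
\[ \sinh\bigl(d_{\DD}(x,\gamma x)/2\bigr)=\cosh\bigl(d_{\DD}(x,L)\bigr)\sinh(\ell/2), \]
combined with $\cosh(a+D)\le 2\cosh(D)\cosh(a)$, yields
\[ \sinh\bigl(d_{\DD}(\tilde q,\gamma\tilde q)/2\bigr)\le 2\cosh(D)\sinh\bigl(d_{\DD}(\tilde p,\gamma\tilde p)/2\bigr). \]
The parabolic case admits an entirely analogous bound using horocyclic coordinates in the upper half-plane model. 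In both cases, for $\inj_S(p)<\varepsilon_0$ small enough, $\inj_S(q)$ is bounded by a fixed multiple of $\inj_S(p)$.

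The main obstacle is this displacement estimate in $\DD$: while it is classical, it requires a case split between the hyperbolic and parabolic types of $\gamma$, and some care to ensure that the multiplicative rather than merely additive bound is extracted. A more geometric alternative is to invoke the thick--thin decomposition of $S$ directly: once $\inj_S(p)$ lies below the Margulis constant, $p$ is contained in a standard collar of a short closed geodesic or in a cusp neighborhood, whose \emph{depth} (the distance from $p$ to the thick boundary) tends to infinity as $\inj_S(p)\to 0$. For $\inj_S(p)$ small enough in terms of $D$, the hyperbolic $D$-ball about $p$ lies entirely in the same thin component, and the explicit injectivity-radius formulae inside a collar or cusp give the same multiplicative bound. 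Either route relies only on standard hyperbolic geometry, and Proposition~\ref{prop:injglobal} follows at once.
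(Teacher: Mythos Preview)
Your proof is correct and shares the same overall architecture as the paper's: reduce via Schwarz--Pick to bounding $\inj(q)$ in terms of $\inj(p)$ when $d_S(p,q)\le D$, lift to the universal cover, pick the deck transformation $\gamma$ realising $\inj(p)$, and bound $d_{\DD}(\tilde q,\gamma\tilde q)$.

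The difference lies in how the last step is carried out. The paper argues \emph{softly}: the deck transformations $\gamma_{n_k}$ satisfy $\gamma_{n_k}(0)\to 0$, hence are precompact in $\mathcal{A}(\DD)$; any limit fixes $0$ but, since no $\gamma_{n_k}$ has a fixed point in $\DD$, Hurwitz forces the limit to be the identity. Locally uniform convergence to the identity on the compact set containing all the $\tilde w_{n_k}$ then gives $d_\DD(\tilde w_{n_k},\gamma_{n_k}\tilde w_{n_k})\to 0$. You instead argue \emph{quantitatively}, using the explicit displacement formula $\sinh(d(x,\gamma x)/2)=\cosh(d(x,\operatorname{axis}\gamma))\sinh(\ell/2)$ (and its parabolic analogue) to extract a genuine multiplicative bound $\sinh(\inj_S(q))\le 2\cosh(D)\sinh(\inj_S(p))$. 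Your route yields a stronger, uniform statement valid for any hyperbolic surface, at the cost of the hyperbolic/parabolic case split; the paper's route avoids the split and the formulae entirely but only gives the sequential conclusion actually needed. Both are standard and either would be acceptable here.
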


Propositions \ref{prop:trich} and \ref{prop:injglobal} show that the qualitative behaviour of the sequences
\[ \text{$\left(\lambda_n(z)\right)_{n=0}^{\infty}$ and $\left(\delta_n(z)\right)_{n=0}^{\infty}$} \]
does not depend on our choice of $z\in U$. Taken together, the asymptotic behaviour of these two sequences shall give us almost complete information about the internal dynamics of the wandering domain, giving rise to a six-way classification. Five of these six cases were previously known, while the sixth is new. Before we state the classification theorem (Theorem~\ref{thm:main}) formally, we introduce some further terminology.

We say that a wandering domain $U$ has \emph{eventual connectivity} $n$, where $1\leq n \leq \infty$, if $U_n$ has connectivity $n$ for all sufficiently large $n$. Note that, if $U$ has eventual connectivity $1$, then it is essentially thick by definition; in this case, we call $U$ \emph{eventually simply connected}. We also say that $U$ is ``locally eventually isometric'' if condition~\ref{item:BEFRSisometric} of Theorem~\ref{thmout:sc} holds for every compact subset $K\subset U$. That is, there exists $N = N(K)\geq 0$ such that the sequence~\eqref{eqn:distancesequence} is
   constant for $n\geq N$, when $(z, w)\in K\times K\setminus E$. 
   (It was shown in~\cite{Fer24}*{Theorem 1.1} that it is equivalent to require only that the condition holds for \emph{some} $K$ with non-empty interior.)

\begin{thm}\label{thm:main} Let $U$ be a wandering domain of a transcendental meromorphic function. Then
   one of the following holds.
  \begin{enumerate}[(1)]
    \item \label{item:contracting}
     $U$ is globally contracting. $U$ has eventual connectivity $1$, $\infty$, or
       none. 
   \item $U$ is essentially thick and globally semi-contracting.  $U$ has eventual connectivity $1$, $\infty$, or
       none.\label{item:semicontracting}
    \item  $U$ is essentially thin, infinitesimally semi-contracting, and bimodal. $U$ has eventual connectivity $\infty$ and $\delta_n(z)\to 0$ for all $z\in U$.\label{item:bimodal}
    \item $U$ is essentially thick and globally eventually isometric. $U$ has finite or infinite eventual connectivity.\label{item:globallyisometric}
    \item $U$ is essentially thick and locally, but not globally, eventually isometric. $U$ has eventual connectivity $\infty$.\label{item:nongloballyisometric}
    \item $U$ is essentially thin, infinitesimally eventually isometric and trimodal. $U$ has eventual connectivity $2$ and $\delta_n(z)\to 0$ for all $z\in U$.\label{item:trimodal}
  \end{enumerate}
\end{thm}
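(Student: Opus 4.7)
The proof combines the infinitesimal trichotomy of Proposition~\ref{prop:trich} with the thick/thin dichotomy of Proposition~\ref{prop:injglobal}, together with an analysis of eventual connectivity. My plan is to treat each infinitesimal type in turn, showing that essential thickness and the topology of the $U_n$ determine the global behaviour of the distance sequence~\eqref{eqn:distancesequence}. The six cases then correspond to: one contracting class, two semi-contracting classes (thick/thin), and three eventually isometric classes (thick-global, thick-local, thin).

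\textbf{Infinitesimally contracting case.} First I would show this always yields globally contracting behaviour, giving case~\ref{item:contracting}. By the chain rule~\eqref{eqn:chainrule}, $\|Df^n(z)\|_U^{U_n}=\prod_k\lambda_k(z)\to 0$, and Proposition~\ref{prop:trich} upgrades this to uniform decay on compact subsets of $U$. Integrating the hyperbolic derivative along a hyperbolic geodesic from $z$ to $w$ in $U$ then gives $d_{U_n}(f^n(z),f^n(w))\to 0$ for all pairs. The list of possible eventual connectivities follows from known constructions and the observation that no further topological restriction appears in this step.

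\textbf{Infinitesimally semi-contracting case.} Here I split according to Proposition~\ref{prop:injglobal}. If $U$ is essentially thick, a uniform lower bound on the injectivity radii at $f^n(z)$ lets me lift $f^n$ locally with bounded distortion; combining summability of $(1-\lambda_n(z))$ with a normal-families argument on these lifts produces a strictly positive, not-achieved limit $c(z,w)>0$, i.e.\ globally semi-contracting behaviour (case~\ref{item:semicontracting}). If $U$ is essentially thin, Margulis-type collar estimates show that pairs of points separated (homotopically) by a thinning collar satisfy $d_{U_n}(f^n(z),f^n(w))\to 0$, while other pairs retain positive limit distance, giving bimodality (case~\ref{item:bimodal}). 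Finite eventual connectivity forces a uniform lower bound on injectivity radii, so essentially thin forces eventual connectivity $\infty$.

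\textbf{Infinitesimally eventually isometric case.} Beyond some $N$, each $f\colon U_n\to U_{n+1}$ is a hyperbolic covering isometry. If $U$ is essentially thick, I would pass to universal covers and lift the $f^n$ to isometries of $\DD$; whether the induced deck-group identifications are compatible across all of $U$ determines whether isometry extends globally. Compatibility always holds when the fundamental groups stabilise correctly, giving case~\ref{item:globallyisometric}; otherwise one obtains isometry only on compact subsets, which is case~\ref{item:nongloballyisometric}, and a rigidity argument shows this requires $\pi_1(U_n)$ to be infinitely generated, hence eventual connectivity $\infty$. If $U$ is essentially thin, the short geodesic in each thinning collar must be preserved by the isometric cover, and its isotopy class together with isometric rigidity forces $U_n$ to be an annulus for all large $n$, giving eventual connectivity exactly $2$. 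Pairs of points in such an annulus are classified by their winding index around the core curve as zero, bounded nonzero, or unbounded, producing the three behaviours of trimodality (case~\ref{item:trimodal}).

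\textbf{Main obstacle.} The hardest new step is case~\ref{item:nongloballyisometric}: identifying the precise hyperbolic-geometric obstruction to extending local eventually-isometric behaviour to a global statement, and verifying that this obstruction is genuinely realisable for a meromorphic but not an entire function. I expect the delicate point to be proving that the failure of global isometry is not an artefact of the base point but a global property, which should combine hyperbolic rigidity with the dynamical constraint that $f$ be a single globally defined meromorphic map; this is also what the paper's construction of such a meromorphic function must exhibit.
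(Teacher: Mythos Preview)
Your overall architecture---the infinitesimal trichotomy of Proposition~\ref{prop:trich} crossed with the thick/thin dichotomy---matches the paper's, and your treatment of the contracting case and the essentially thick cases is broadly in line with what the paper does (the latter via \cite[Theorem~1.1]{Fer24}). But two points are genuinely wrong.

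First, the sentence ``Finite eventual connectivity forces a uniform lower bound on injectivity radii, so essentially thin forces eventual connectivity~$\infty$'' is false. An annulus of modulus $M$ has connectivity $2$ but core geodesic of length $\pi/M$, so the injectivity radius at the core tends to $0$ as $M\to\infty$; indeed case~\ref{item:trimodal} is precisely an essentially thin domain with eventual connectivity $2$. The connectivity restrictions in cases~\ref{item:contracting}--\ref{item:bimodal} do \emph{not} come from injectivity-radius bounds: they come from \cite[Theorem~1.1]{Fer22}, which shows that an infinitesimally contracting or semi-contracting wandering domain cannot have finite eventual connectivity greater than~$1$. Your remark that the connectivity list in case~\ref{item:contracting} ``follows from known constructions'' similarly confuses realisability with restriction---constructions exhibit what can occur, not what cannot.

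Second, you have located the main obstacle in the wrong place. In the paper, the classification part of case~\ref{item:nongloballyisometric} is essentially immediate from \cite[Theorem~1.1]{Fer24} once essential thickness provides a uniform ball on which each $\varphi_n$ is an isometry; the connectivity claim is then a short Riemann--Hurwitz argument about covering maps between domains of finite connectivity. The genuinely new and technical content underlying Theorem~\ref{thm:main} lies in the essentially \emph{thin} cases~\ref{item:bimodal} and~\ref{item:trimodal}. There the paper must prove (Lemma~\ref{lem:injlimit} and Theorem~\ref{thm:annuli}) that $\delta_n(z)$ is eventually non-increasing and tends to~$0$ along the whole sequence, that there is a forward-invariant absorbing sequence of annuli with moduli tending to infinity, and then (for the bimodal case) that the deck groups $\Gamma_n$ converge geometrically to a continuous one-parameter group whose orbits yield the contracting foliation. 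Your sketch of ``Margulis-type collar estimates'' and ``winding index'' gestures at this but supplies none of the mechanism; without the monotonicity of $\delta_n$ and the absorbing-annuli statement you cannot conclude bimodality, trimodality, or the eventual connectivity~$2$ in case~\ref{item:trimodal}. The realisability question you flag---whether case~\ref{item:nongloballyisometric} actually occurs---is the content of the separate Theorem~\ref{thm:ex}, not of Theorem~\ref{thm:main}.
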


With the exception of~\ref{item:nongloballyisometric}, all possibilities listed above may be realised using known constructions. If $f$ is entire, then $\delta_n(z_0)\to 0$ when $U$ is multiply connected; in particular,~\ref{item:nongloballyisometric} cannot occur for entire functions. All other cases can be realised by entire functions, subject to the restriction that all $U_n$ are simply connected in cases~\ref{item:contracting}, \ref{item:semicontracting} and~\ref{item:globallyisometric}. We refer to Section~\ref{sec:ex} for a more detailed discussion.
    
It is natural to ask whether~\ref{item:nongloballyisometric} can be realised for non-entire meromorphic functions. Our next theorem shows that this is indeed the case.
\begin{thm} \label{thm:ex}
There exists a transcendental meromorphic function $g$ with a wandering domain that is locally eventually isometric but not globally eventually isometric.
\end{thm}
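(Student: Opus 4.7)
The plan is to construct $g$ via an explicit approximation argument, guided by the following principle distilled from Proposition~\ref{prop:trich} together with the discussion above: the wandering domain $U$ is infinitesimally eventually isometric precisely when, for $n$ large, the restriction $g\colon U_{n-1}\to U_n$ is an unbranched covering map. If such coverings have degree $d\geq 2$, then long loops in $U_n$ that fail to lift to $U_{n-1}$ force the hyperbolic distance between sufficiently far-apart points to strictly decrease at each step. This produces local eventual isometry (on compacts whose orbits fit in an injectivity ball) while blocking global eventual isometry.

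The first step is to fix model data. Choose $d\geq 2$ and a rapidly increasing sequence $c_n\to\infty$ in $\Cx$, and for each $n\geq 0$ define a bounded, infinitely connected domain $V_n\subset\Cx$ containing $c_n$ whose injectivity radius at $c_n$ is uniformly bounded below and whose hyperbolic geometry near $c_n$ is controlled. The data is chosen so that there exist unbranched coverings $\phi_n\colon V_{n-1}\to V_n$ of degree $d$ with $\phi_n(c_{n-1})=c_n$, obtained, for example, by taking the $V_n$ as translates of a fixed covering template so that the holes of $V_n$ match up correctly.

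The second step is to assemble a single transcendental meromorphic $g\colon\Cx\to\Chat$ that agrees \emph{exactly} with $\phi_n$ on $V_{n-1}$ for every $n\geq 1$. The pairwise disjointness of the $V_n$ and their escape to $\infty$ make this feasible via a meromorphic interpolation theorem or a $\bar\partial$-correction of a smooth cutoff, with poles of $g$ placed outside $\bigcup V_n$ to ensure transcendence and to prevent extraneous Fatou dynamics.

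Finally I would verify the dynamical picture. Each $V_n$ lies in a Fatou component $U_n$ because iterates of $g$ starting in $V_0$ stay in $\bigcup V_n$ and form a normal family there by Montel, and the $U_n$ are pairwise distinct, so $U_0$ is wandering. Since $g\colon U_{n-1}\to U_n$ is an unbranched degree-$d$ covering for large $n$, we have $\lambda_n\equiv 1$, giving infinitesimal eventual isometry. For local eventual isometry on a compact $K\subset U_0$, Schwarz--Pick makes $\diam_{U_n}(g^n(K))$ non-increasing; together with essential thickness, this diameter eventually drops below twice the injectivity radius of $U_n$, after which $g$ acts as a genuine isometry on $g^n(K)$. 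For the failure of global eventual isometry, one chooses $(z,w)\in U\times U\setminus E$ whose distance is so large that at each step the covering forces a shortcut through a loop not lifting to $U_{n-1}$, yielding a strictly decreasing subsequence of $d_{U_n}(g^n(z),g^n(w))$. The hardest step is the exact assembly: the condition $\lambda_n\equiv 1$ demands $g|_{V_{n-1}}=\phi_n$ identically rather than approximately, so producing a single transcendental meromorphic $g$ realizing all the prescribed coverings while keeping the wandering structure intact requires a delicate interplay of approximation, interpolation, and dynamical control.
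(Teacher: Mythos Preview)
Your proposal correctly identifies the central difficulty: to obtain $\lambda_n\equiv 1$ one needs the transition maps to be \emph{exact} unbranched coverings, not approximate ones. However, the remedy you propose---building a meromorphic $g$ with $g|_{V_{n-1}}=\phi_n$ identically for all $n$, via interpolation or a $\bar\partial$-correction---does not work. A $\bar\partial$-correction of a smooth patching will in general be non-zero on the $V_n$ themselves, destroying the exact equality you need; and there is no interpolation theorem that prescribes a meromorphic function on an infinite collection of open sets. You flag this as ``the hardest step'', but it is in fact an obstruction, not a technicality.

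The paper resolves this differently. Rather than demanding $g=\phi_n$ on the model domains, it invokes an approximation theorem (Theorem~\ref{thm:magic}) that produces $g$ together with homeomorphisms $\theta_n\colon K_n\to\widetilde K_n$, conformal on interiors, satisfying the exact conjugacy $g\circ\theta_n=\theta_{n+1}\circ\phi_{n+1}$. Thus $g$ is only $\epsilon_n$-close to the model in the plane, but is \emph{conformally conjugate} to it on the relevant domains. Since conformal conjugacy preserves both the covering property and the hyperbolic metric, the images $\widetilde K_n$ are genuine Fatou components on which $g$ acts as a degree-two unbranched cover. This is the key idea you are missing.

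There is a second gap in your verification of local eventual isometry. You argue that $\diam_{U_n}(g^n(K))$ is non-increasing and ``eventually drops below'' twice the injectivity radius. But once the maps are coverings, Schwarz--Pick gives \emph{equality}, so the diameter is constant and never drops; and essential thickness only bounds the injectivity radius from below, not from above the diameter of an arbitrary compact $K$. The paper instead builds the model maps (degree-two Blaschke products $b_n$) so that their region of injectivity on $\DD$ expands to fill the whole disc as $n\to\infty$; this is what guarantees that any fixed compact eventually sits inside the injective region, and the extension from a small ball to all compacts is then imported from~\cite[Theorem~1.1]{Fer24}.
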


In the process of proving Theorem~\ref{thm:main}, we show that essentially thin wandering domains may exhibit striking similarities to multiply connected wandering domains of entire functions. Let us take a moment to explore this similarity in greater detail.

\subsection*{Singular foliations in essentially thin domains}
Suppose that $U$ is a multiply connected wandering domain of an entire function; then $U$ is essentially thin and non-contracting (i.e., either infinitesimally eventually isometric or infinitesimally semi-con\-tracting). It is known that there exists an absorbing sequence of round annuli $A_n\subset U_n$ such that any orbit in $U$ enters $A_n$ for all sufficiently large $n$~\cite{BRS13}*{Theorem~1.3}. Moreover, $\Mod A_n\to+\infty$ as $n\to+\infty$. (Recall that the \emph{modulus} of a doubly connected hyperbolic Riemann surface $A$ is the unique value of $M \in(0,\infty]$ such that $A$ is biholomorphic to $\{z\colon 1 < |z| < e^{2\pi M}\}$; see \cite{Hub06}*{Proposition 3.2.1}.) Furthermore, there exist dynamically defined singular analytic foliations that determine the long-term behaviour of the sequence~\eqref{eqn:distancesequence}. Here a \emph{singular analytic foliation} of a domain $U$ is a stratification of $U$ into sets, called \emph{leaves}, such that these leaves are locally the preimages of straight horizontal lines under a non-constant holomorphic function, or equivalently the level lines of a non-constant harmonic function. (See also \cite{Fer22}*{pp.~1899--1900}, and \cite{Fer22}*{p.~1902}.) The same turns out to be true for essentially thin and non-contracting cases in general, i.e.\ for the bi- and trimodal cases in Theorem~\ref{thm:main}.  
  
\begin{defi}\label{def:foliations}
Let $U$ be a wandering domain of the meromorphic function $f$.
\begin{itemize}
    \item We say that $U$ admits a \textit{contracting (singular) foliation} if there exists a singular foliation $\mathcal{C}$ of $U$ such that $d_{U_n}\left(f^n(z), f^n(w)\right)\to 0$ for every $z$ and $w$ on the same leaf of $\mathcal{C}$.
    \item We say that $U$ admits an \textit{eventually isometric (singular) foliation} if there exists a singular foliation $\mathcal{L}$ of $U$ such that, 
    for every $z$ and $w$ on the same leaf of $U$ and for all large $n$, $d_{U_n}\left(f^n(z), f^n(w)\right) = c(z, w) > 0$.
    \item We say that a singular foliation on $U$ is
       \emph{dynamical} if its image under $f^n$ is also a singular foliation 
       for every $n\geq 0$.
\end{itemize}
\end{defi}

With this notion, we can make cases~\ref{item:trimodal} and~\ref{item:bimodal}
 of Theorem~\ref{thm:main} more precise as follows.

\begin{thm}\label{thm:EI}
Let $f$ be a meromorphic function, and let $U$ be an essentially thin infinitesimally eventually isometric wandering domain of $f$. Then $U$ has eventual connectivity $2$ and $\delta_n(z)\to 0$ for all $z\in U$; in particular, $\Mod U_n\to \infty$ as $n\to\infty$. Furthermore, $U$ admits a unique contracting dynamical singular foliation and a unique eventually isometric dynamical singular foliation, both of which are analytic.
\end{thm}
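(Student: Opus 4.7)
The plan is to begin by exploiting infinitesimal eventual isometry to obtain a rigid structural fact. Fix $N$ such that $\lambda_n(z)=1$ for every $z\in U$ and every $n\geq N$. Then for each $n\geq N$, the map $f\colon U_{n-1}\to U_n$ is a local hyperbolic isometry at every point. Lifting to the universal covers (both biholomorphic to $\DD$), the lift is a holomorphic self-map of $\DD$ that is everywhere a hyperbolic isometry~-- hence a M\"obius automorphism of $\DD$ by the rigidity case of Schwarz--Pick. Projecting back down, $f\colon U_{n-1}\to U_n$ is a holomorphic covering map for every $n\geq N$. Because $f_{*}$ is then injective on $\pi_1$, the shortest non-trivial loop at $f^n(z_0)$ projects via $f$ to a non-trivial loop at $f^{n+1}(z_0)$ of the same length, giving monotonicity $\delta_{n+1}(z_0)\leq \delta_n(z_0)$ for $n\geq N$. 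Combined with essential thinness, which supplies $\delta_{n_k}(z_0)\to 0$ along a subsequence, this forces $\delta_n(z_0)\to 0$, and Proposition~\ref{prop:injglobal} transfers the convergence to every $z\in U$.

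Next I would pin down the eventual connectivity. Write $c_n\in\{1,2,\ldots,\infty\}$ for the connectivity of $U_n$. For finite $c_n, c_{n+1}$, the Euler-characteristic relation $2-c_n=d_n(2-c_{n+1})$, with $d_n$ the covering degree, forces the sequence of finite $c_n$ to be non-increasing, hence eventually constant at some value $c^{*}$. The case $c^{*}=1$ is excluded because simply-connected hyperbolic surfaces have infinite injectivity radius, and any finite $c^{*}\geq 3$ forces $d_n=1$ eventually, making $f$ a biholomorphic hyperbolic isometry and $\delta_n$ constant~-- incompatible with $\delta_n\to 0$. Ruling out $c^{*}=\infty$ will be the main obstacle: the plan is to combine the Margulis lemma~-- which identifies the thin-part component containing $f^n(z_0)$ with a single cyclic subgroup of $\pi_1(U_n)$~-- with the rigidity of the isometric covering, arguing that any additional generator of $\pi_1(U_n)$ would be conjugated into $\pi_1(U_{n+1})$ as an element of equal translation length, producing a bounded-below geometric invariant that persists along the orbit and is incompatible with $\delta_n\to 0$. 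Once $c^{*}=2$ is in hand, $U_n$ is eventually an annulus whose core geodesic has length controlled by $\delta_n$, from which the stated convergence of $\Mod U_n$ follows.

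For the two singular foliations, I plan to work in annular coordinates on $U_N$ with respect to which each subsequent $f\colon U_n\to U_{n+1}$ takes the form of a power map $z\mapsto z^{d_n}$. Pulling back along the orbit the two natural foliations of the round annulus~-- by concentric circles and by radial geodesic arcs~-- yields singular foliations $\mathcal{C}$ and $\mathcal{L}$ on $U$, with singularities confined to the orbit of critical points of $f^N$. The radial foliation $\mathcal{L}$ will be eventually isometric: two points on a common radial ray remain on a single ray under each power map, and the trivial lift realises the minimum in the standard covering-distance formula, so the hyperbolic distance in $U_n$ is preserved for all $n\geq N$. The concentric-circle foliation $\mathcal{C}$ will be contracting: the angular separation between two points on a common horizontal circle is divided by $d_n$ at each step via the deck identifications, and since $\delta_n\to 0$ forces $\prod_n d_n\to\infty$, the hyperbolic distance in $U_n$ collapses to zero. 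Both verifications reduce to direct estimates in the strip model $w=\log z$.
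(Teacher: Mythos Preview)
Your approach diverges from the paper's in instructive ways. The paper packages $\delta_n\to 0$ and the existence of a single absorbing collar into a general result (Theorem~\ref{thm:annuli}) valid in both the eventually isometric and semi-contracting cases, and then cites \cite[Theorem~1.1]{Fer22} for the foliations. You instead exploit the covering property directly, which is cleaner in the eventually isometric setting: the monotonicity $\delta_{n+1}\leq\delta_n$ via injectivity of $f_*$ is immediate, the Euler-characteristic relation neatly dispatches finite eventual connectivity $c^*\geq 3$, and the explicit foliation construction avoids the external citation.

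However, your argument for ruling out $c^*=\infty$ has a genuine gap. That a second generator $[\sigma]\in\pi_1(U_N)$ pushes forward to elements of constant translation length $\ell(\sigma)$ does \emph{not} by itself contradict $\delta_n(z_0)\to 0$: the geodesic representing $f^n_*[\sigma]$ may simply lie far from $f^n(z_0)$, in a thick region of $U_{N+n}$. What is missing is an \emph{absorption} step: you must show that the compact set $f^n(\sigma)$ is eventually contained in the collar $A_{\gamma_n}$ around the single short geodesic $\gamma_n$. This is exactly the content of Theorem~\ref{thm:annuli}\ref{item:allpointsenter}, and uses that the collar width $\eta(\ell(\gamma_n))\to\infty$ while $\diam_{U_n}(f^n(\sigma))$ stays bounded by Schwarz--Pick. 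Once $f^n(\sigma)$ lies in a collar you can finish: it is a closed geodesic (local isometry), hence a power of $\gamma_n$, so $f^n_*[\sigma]$ and $f^n_*[\gamma_N]$ commute; by injectivity of $f^n_*$ and primitivity in the free group $\pi_1(U_N)$ this forces $\sigma=\gamma_N$. Note that this single argument, which is essentially the paper's, handles $3\leq c^*\leq\infty$ at once, making the Euler-characteristic step redundant.

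A smaller point: in your contracting-foliation discussion the angular separation is \emph{multiplied} by $d_n$, not divided; it is the hyperbolic density along the circle (equivalently, the circle length $\ell_n\cosh h$) that shrinks. The conclusion is correct but the bookkeeping should be fixed.
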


\begin{thm}\label{thm:semi-contracting}
Let $f$ be a meromorphic function with an essentially thin infinitesimally semi-contracting wandering domain $U$. Then $\delta_n(z)\to 0$ for all $z\in U$, and there is a sequence of homotopically non-trivial topological annuli $A_n\subset U_n$ such that $f(A_n)\subset A_{n+1}$ for all $n$, $f|_{A_n}$ has no critical points, $\Mod A_n\to \infty$ as $n\to+\infty$, and for any compact subset $K\subset U$, there exists $N\geq 0$ such that $f^n(K)\subset A_n$ for $n\geq N$. Furthermore, $U$ admits a unique contracting dynamical singular foliation, which is analytic.
\end{thm}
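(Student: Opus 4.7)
The plan is to combine the short essential closed geodesics provided by the essentially thin hypothesis with the near-isometric behaviour of $f$ supplied by the semi-contracting condition. From these I would construct an invariant sequence of collar annuli $A_n\subset U_n$ whose moduli tend to infinity, and then pull back the core foliation of these annuli to obtain a contracting singular foliation on $U$. The very first task is to upgrade the essentially thin property from a subsequence to the full sequence: by Definition~\ref{def:thickthin} and Proposition~\ref{prop:injglobal}, there is a subsequence $(n_k)$ with $\delta_{n_k}(z)\to 0$ for every $z\in U$, so the Margulis collar lemma yields an essential simple closed geodesic $\gamma_{n_k}\subset U_{n_k}$ of hyperbolic length $\ell_k\to 0$. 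The semi-contracting hypothesis together with Proposition~\ref{prop:trich} shows that the cumulative distortion $\prod_{m>n}\lambda_m(z)$ tends to $1$ as $n\to\infty$, uniformly on compact subsets of $U$. Since $f\colon U_{n-1}\to U_n$ is a hyperbolic contraction, $f(\gamma_{n_k})$ has length at most $\ell_k$; the near-isometric bound also forces this length to be at least $(1-o(1))\ell_k$ on the short-loop scale, and once $\ell_k$ is below the Margulis constant for $U_{n_k+1}$, the image cannot be null-homotopic (an inessential loop of such small length would bound a hyperbolic disk of small diameter, incompatible with the near-isometric push of an essential loop). Iterating, every $U_n$ with $n$ sufficiently large contains a short essential loop, so $\delta_n(z)\to 0$ along the full sequence.

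Next, let $\gamma_n$ be the shortest essential simple closed geodesic in $U_n$ near $f^n(z_0)$, for $n$ large. The collar lemma provides a standard tubular neighbourhood $A_n\subset U_n$, a topological annulus whose modulus tends to infinity as $\ell(\gamma_n)\to 0$. The inclusion $f(A_n)\subset A_{n+1}$ follows because $f(\gamma_n)$ is homotopic in $U_{n+1}$ to $\pm\gamma_{n+1}$ (the degree being $\pm 1$ by the near-isometric bound), and standard collars are monotone with respect to the hyperbolic length of their cores. The absorbing property is a consequence of Schwarz--Pick: for any compact $K\subset U$, the hyperbolic diameter of $f^n(K)$ in $U_n$ is non-increasing in $n$, while the hyperbolic radius of $A_n$ about $f^n(z_0)$ grows like $\log\Mod A_n\to+\infty$. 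Finally, for the contracting singular foliation, I would foliate each $A_n$ by the simple closed curves parallel to $\gamma_n$ and define $\mathcal{C}$ on $U$ by consistently pulling these back under $f^n$, with singularities at iterated preimages of critical points. Two points on the same leaf of $\mathcal{C}$ have images lying on a common core-parallel curve in $A_n$ for all large $n$, so their hyperbolic distance in $U_n$ is at most a bounded multiple of $\ell(\gamma_n)$, which tends to $0$.

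The main obstacle I expect lies in the first step — the homotopy-preservation argument that rules out the image of a short essential loop from becoming null-homotopic in $U_{n+1}$. I would handle this by making the near-isometric comparison quantitative against the Margulis constant for $U_{n+1}$, and by leveraging the framework for tracking essential loops across iterations developed in~\cite{Fer22, Fer24}.
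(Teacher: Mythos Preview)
Your overall strategy aligns with the paper's, but two of the specific steps do not go through as stated.

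First, the claim that $f(\gamma_n)$ is homotopic in $U_{n+1}$ to $\pm\gamma_{n+1}$ with degree $\pm 1$ is not justified and is in general false. The near-isometric bound says only that $\ell_{U_{n+1}}(f(\gamma_n))$ is close to $\ell_{U_n}(\gamma_n)$; if $f(\gamma_n)\simeq\gamma_{n+1}^{d_n}$ one obtains $|d_n|\,\ell(\gamma_{n+1}) \leq \ell(f(\gamma_n))$, which is compatible with large $|d_n|$ provided $\ell(\gamma_{n+1})$ is correspondingly smaller --- and indeed the paper shows (Lemma~\ref{lem:injlimit}\ref{item:nontrivialloop}) that the cumulative winding number tends to infinity. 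The paper handles both the monotonicity $\delta_{n+1}(z_0)\leq\delta_n(z_0)$ and the inclusion $f(A_n)\subset A_{n+1}$ not by following the geodesic $\gamma_n$ but via the lifts $g_n\colon\DD\to\DD$ of Lemma~\ref{lem:lifts}: since $|g_n'(0)|\to 1$ the $g_n$ converge to rotations and are eventually injective on any fixed disc about $0$, so the nearest non-trivial fibre point over $f^n(z_0)$ maps to a nonzero point at least as close to $0$. For $f(A_n)\subset A_{n+1}$ one then shows $\inj(f(z))\leq\inj(z)$ for every $z\in A_n$ directly: any short essential loop through $z$ has image that is still essential (by the argument principle) and no longer, so $f(A_n)$ lies in the thin part of $U_{n+1}$, which is a disjoint union of collars since Fatou components have no cusps.

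Second, your foliation construction is incomplete. Pulling back the core-parallel foliation of $A_n$ under $f^n$ gives a foliation that depends on $n$, because $f$ does not carry core-parallel curves in $A_n$ exactly to core-parallel curves in $A_{n+1}$. You do not explain why these $n$-dependent foliations converge, nor why two points on a limiting leaf land on the \emph{same} core-parallel curve in $A_n$ for all large $n$ rather than merely nearby ones. The paper makes this precise on the universal cover: the deck groups $\Gamma_n\subset\mathcal{A}(\DD)$ of $\varphi_n$ converge geometrically (Lemma~\ref{lem:geomcompact}) to a continuous one-parameter group $\Gamma$, and the foliation of $\DD$ by $\Gamma$-orbits is pulled back to $U$ via the limit $G=\lim_n g_n\circ\cdots\circ g_0$ of the composed lifts. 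The geometric convergence of the $\Gamma_n$ is exactly what supplies the ``consistency'' you allude to.
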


Recall that any doubly connected plane domain of sufficiently large modulus contains a round annulus of not much smaller modulus. (This was first proved by Teichm\"uller; see~\cite{Ahl73}*{Equations (4-21) and (4-23)} and also~\cite{AW2009}*{Theorem~3.17}.) Furthermore, an unbranched covering map between topological annuli acts as a power map between annuli (see, for instance, \cite{BM06}*{Theorem~13.6}). With this in mind, Theorems~\ref{thm:EI} and~\ref{thm:semi-contracting} generalise results of Zheng \cite{Zhe06} on the existence of large round annuli in multiply connected wandering domains of meromorphic functions with finitely many poles, as well as results of Bergweiler, Rippon, and Stallard \cite{BRS13}*{Theorem 1.4} on absorbing annuli without critical points. Note that, in \cite{Zhe06} and \cite{BRS13}, the wandering domains themselves are large in the Euclidean sense, and so are the relevant subannuli. For meromorphic maps, the wandering domains may be small, in which case they contain subannuli that are small in the Euclidean sense, but with large modulus. See also \cite{Zhe17} for more results on the size of annuli contained in multiply connected wandering domains.

The above results suggest that essentially thin non-contracting wandering domains are natural generalisations of multiply connected wandering domains of entire functions. For further comparisons between our results and those of~\cite{BRS13}, see Section~\ref{sec:entire}.

\subsection*{The contracting case}
The case of an infinitesimally contracting wandering domain is the simplest from the point of view of the study of the sequence~\eqref{eqn:distancesequence}: such domains are always globally contracting, i.e.\ for every compact set $K\subset U_0$, the sequence tends to zero uniformly in $z,w\in K$ as $n\to\infty$. In other respects, the contracting case is the most flexible. For example, it is the only case where the sequence $(\delta_n(z))_{n=0}^{\infty}$ of injectivity radii may tend to zero on a subsequence, but not globally. Indeed, in the non-contracting case, we have either $\delta_n(z)\to \infty$, or $\delta_n$ is eventually non-increasing (see Lemma~\ref{lem:homomorphisms}). The flexibility of contracting wandering domains can also be seen in the study of \emph{grand orbit relations}. Following~\cite{nuriachristian} 
 and~\cite{vassoetalorbitrelations}, we say that a point $z_0\in U_0$ has
 \emph{discrete grand orbit relation} if the intersection of the grand orbit of $z_0$ under $f$
 with $U_0$~-- that is, the set of $z$ with $f^n(z)=f^n(z_0)$ for some $n\geq 0$~-- is discrete. 
 Otherwise, we say that the relation is indiscrete. 
 It is known that there are wandering domains that contain both points with discrete and with
 indiscrete orbit relations~\cite{vassoetalorbitrelations}. Using the results of~\cite{REM24},
 one can also show that there exists a wandering domain $U_0$ for which the grand orbit of some $z_0$ under
 $f$ is dense in $U_0$. In the infinitesimally non-contracting case, the situation is much
 simpler. If such a wandering domain is essentially thick, then the orbit relation is discrete;
 if it is essentially thin, then the closure of any grand orbit in $U_0$ is locally
 contained in the unique contracting singular foliation. See Proposition~\ref{prop:GOrelations}.

\subsection*{Ideas of the proofs}
The most challenging part of our main theorem (Theorem~\ref{thm:main}) is proving Theorems ~\ref{thm:EI} and~\ref{thm:semi-contracting}. The corresponding results for entire functions span many papers~\cites{KS08,BRS13,Fer22}, and rely on structural theorems available only to entire functions. Most crucially, in~\cite{BRS13}, the authors use properties of the maximum modulus to obtain a harmonic function $h$ on
$U_0$ that encodes (in a certain sense) the rate of growth of $z\in U_0$ relative to a base point $z_0\in U_0$; see~\eqref{eqn:BRSu}. The properties of $h$ lead to the existence of an absorbing sequence of large (round) annuli, and the contracting foliation from~\cite[Theorem~1.2]{Fer22} is given by the level lines of $h$.

For meromorphic functions, we do not have the maximum modulus to work with, and orbits in a multiply connected wandering domain need not tend to infinity. Hence the above strategy cannot be 
applied, and new ideas are required to deal with the non-contracting thin case. To overcome this challenge,
we consider universal coverings of the domains $U_n$ (Lemma~\ref{lemma:lifts}), 
and show that the corresponding groups of 
deck transformations converge to 
a continuous one-dimensional limit group (Lemma~\ref{lem:homomorphisms}). The orbits of points under this limit group give rise to the desired contracting foliation. For large $n$, the point $f^n(z_0)$ is contained in an annulus of large modulus bounded by closed leaves in the contracting foliation. As we show in Section~\ref{sec:entire},
the foliation is again given by the level lines of a positive harmonic function~--
but here the function is obtained \emph{a posteriori} from the contracting foliation.

(Another approach to the existence of attracting foliations in multiply connected wandering domains of entire functions was outlined by Shishikura~\cite{shishikuratalk} in unpublished work. His argument uses the existence of collars (annuli of large modulus) around short geodesics in $U_n$. Using this theorem and the Schwarz--Pick lemma, Shishikura is able to show the existence of an absorbing sequence of such collars. He then constructs the desired foliation as a limit of the 
concentric foliations of the collars. 
This strategy can 
also be adapted to our setting. Instead, we construct the foliation directly; 
 the absorbing annuli 
and their key properties appear naturally via the structure of this
foliation.)

The remainder of Theorem~\ref{thm:main} is proved by applying or adapting known 
techniques. In order to show the existence of locally but not globally eventually 
isometric wandering domains (Theorem~\ref{thm:ex}), we first show that such an 
example can be realised by a \emph{non-autonomous} sequence of 
holomorphic covering maps between plane domains (see Definition~\ref{defn:nonautonomous}). Each of these covering maps is the restriction of a Blaschke product of the disc of degree two, chosen 
(inductively) to be
sufficiently close to the identity on a large subset of the disc (Proposition~\ref{prop:model}). We then
realise this model dynamics by a multiply connected wandering
domain of a meromorphic function, using a recent 
result of Evdoridou, Mart\'i-Pete, and the second author~\cite{REM24}.

\subsection*{Non-autonomous systems of maps between hyperbolic surfaces}
Although we have stated our results above for wandering domains of meromorphic functions, most proofs do not rely on the fact
that the functions between them are globally defined, and make
sense in the following more general setting.

\begin{defi}\label{defn:nonautonomous}
A \emph{non-autonomous system of holomorphic maps between hyperbolic Riemann surfaces}~-- or, more succinctly, a \emph{non-autonomous system on hyperbolic surfaces}~-- is a sequence $F = (U_n,f_n)_{n=0}^{\infty}$, where each $U_n$ is a hyperbolic Riemann surface and each $f_n\colon U_n\to U_{n+1}$ is non-constant and holomorphic.
\end{defi}
The concepts introduced for orbits of wandering domains above all extend verbatim to the case of sequences as above and all our results above remain valid and will be proved in this more general setting, with the exception of certain statements about eventual connectivities (compare Theorem~\ref{thm:main-nonautonomous}). In particular, these results extend immediately to wandering domains in the more general setting of Epstein's \emph{Ahlfors islands maps}; compare~\cite{exoticbakerdomains}.

\subsection*{Structure of the paper}
The structure of this paper is as follows. In Section \ref{sec:prelim}, we collect some concepts and results on approximation theory and hyperbolic geometry to make use of during our proofs. Section \ref{sec:uniform} is dedicated to proving Propositions \ref{prop:trich} and \ref{prop:injglobal}. Section~\ref{sec:global} is 
the heart of the paper: we consider essentially thin wandering domains, proving
Theorems \ref{thm:EI} and \ref{thm:semi-contracting}. 
In Section~\ref{sec:entire}, we compare the results of Section~\ref{sec:global} with
those for multiply connected wandering domains of entire functions (this section
is independent of the proofs of our main results). We deduce Theorem~\ref{thm:main} in Section~\ref{sec:mainproof}. Finally, in Section \ref{sec:ex}, we discuss how to realise all cases of Theorem~\ref{thm:main}. In particular, we prove Theorem \ref{thm:ex}. 

\subsection*{Basic notation}
As usual, $\DD$, $\Cx$ and $\Chat$ denote the unit disc, complex plane and
 Riemann sphere, respectively. Closures and boundaries are taken in
  $\Cx$, unless explicitly stated otherwise. The Euclidean disc of radius $\delta$ centred
 at $z_0$ is denoted $D(z_0,\delta)$. 
 
 We frequently work with the hyperbolic metric on a 
  hyperbolic domain $U\subset\Cx$; i.e., an open connected set such that
  $\#\Cx\setminus U>1$. We refer to~\cite{BM06} for background on 
  hyperbolic geometry, and to Section~\ref{sec:prelim} for  further
  discussion. 
   Hyperbolic distance and hyperbolic diameter in $U$
   are denoted by $d_U$ and $\diam_U$, respectively. The hyperbolic
   disc of radius $\delta>0$ is denoted 
     \[ D_U(z_0,\delta)\defeq \{z\in U\colon d_U(z,z_0)<\delta. \} \]

\subsection*{Acknowledgements}
The first author thanks Phil Rippon and Gwyneth Stallard for introducing him to the internal dynamics of wandering domains.

\section{Preliminaries}\label{sec:prelim}
The proof of Theorem \ref{thm:ex} will use the following result of Evdoridou, Martí-Pete, and Rempe \cite{REM24} for constructing meromorphic functions with prescribed behaviour.
\begin{thm}[Realising prescribed wandering dynamics] \label{thm:magic}
Let $(X_n)_{n\geq 0}\subset \Cx$ be a sequence of pairwise disjoint compact sets such that $X_n\to\infty$ as $n\to\infty$. Let $\varphi$ be a holomorphic function defined in
a neighbourhood of $X\defeq \bigcup_{n=0}^{\infty} X_n$ such that
$\varphi(X_n)\subset X_{n+1}$ and $\varphi(\partial X_n)\subset \partial X_{n+1}$ for all
$n\geq 0$. Let $(\varepsilon_n)_{n\geq 0}$ be a sequence of positive numbers. Then there exists a meromorphic function $f\colon\Cx\to\Chat$ and a $\mathcal{C}^{\infty}$ diffeomorphism
$\theta\colon \Cx\to\Cx$ such that 
\begin{enumerate}[(a)]
    \item $f\circ\theta = \theta\circ \varphi$ on $X$;
    \item $\theta(\partial X) \subset J(f)$;
    \item $\theta$ is conformal on $\intr(X)$; and
    \item $|\theta(z) - z| \leq \varepsilon_n$ for $z\in X_n$.
\end{enumerate}
\end{thm}

We now review some important results and concepts in hyperbolic geometry, beginning with the definition of hyperbolic distortion. 
\begin{defi}[Hyperbolic distortion]\label{def:distortion}
 Let $f\colon U\to V$ be a holomorphic map between hyperbolic Riemann surfaces, and let 
   $z_0\in U$. The \emph{hyperbolic distortion} of $f$ at $z_0$ is
     \begin{equation}
         \label{eqn:hypdistortion} 
       \| \Deriv f(z)\|_U^V \defeq \lim_{z\to z_0} \frac{d_V(f(z),f(z_0))}{d_U(z,z_0)}.
       \end{equation}
\end{defi}
If $z$ and $w$ are local coordinates of $U$ and $V$ near $z_0$ and $f(z_0)$, 
we can express the hyperbolic metrics of $U$ and $V$ as
$\rho_U(z)\deriv z$ and $\rho_V(w) \deriv w$ in these coordinates, with
$\rho_U(z), \rho_V(w)>0$. In terms of these densities,  the hyperbolic distortion of $f$ at $z_0$ can be expressed as
\begin{equation}\label{eqn:hypdistortion_coords} 
\|\Deriv f(z)\|_U^V = \frac{\rho_V\left(f(z)\right)|f'(z)|}{\rho_U(z)} \end{equation}
When $V=U$; i.e., $f\colon U\to U$ is a holomorphic
self-map of $U$,
we also write $\|\Deriv f(z)\|_U$ instead of
$\|\Deriv f(z)\|_U^U$.

The \emph{Schwarz--Pick theorem}~\cite[Theorem~3.2]{BM06} states that holomorphic maps do not increase the hyperbolic metric. 
\begin{thm}[Schwarz--Pick theorem]\label{thm:SP}
Let $f\colon U\to V$ be a holomorphic map between hyperbolic Riemann surfaces. Then
\begin{equation}\label{eqn:schwarzpick} \|\Deriv f(z)\|_U^V \leq 1 \text{ for all $z\in U$}, \end{equation}
with equality if and only if $f\colon U\to V$ is an unbranched covering map.
\end{thm}

Beardon~\cite{beardonschwarzpickderivatives} proved a version of the 
Schwarz--Pick theorem for the hyperbolic distortion; see also \cite[Theorem~11.2]{BM06}.

\begin{thm}[Schwarz--Pick theorem for hyperbolic distortion]\label{thm:schwarzpickdistortion}
 Let $f\colon U\to V$ be a holomorphic map between hyperbolic Riemann surfaces but not a covering map. Then, for all $z,w\in U$, 
  \[ d_{\DD}\bigl(\|\Deriv f(z)\|_U^V ,\|\Deriv f(z)\|_U^V\bigr) \leq  2d_{U}(z,w). \]
\end{thm}

As mentioned above, we will prove all of our results for the general case of non-autonomous systems; the dynamics of meromorphic functions on an orbit of wandering domains is a special case. We use the following notation.

\begin{defi}[Non-autonomous iteration]
Let $F=(U_n,f_n)_{n=0}^{\infty}$ be a non-autonomous system on hyperbolic surfaces, and let $z_0\in U_0$. For $k,n\geq 0$, we define 
  \[ f_n^{k} \defeq f_{n+k-1}\dots f_n \colon U_n\to U_{n+k}. \]
The sequence $(z_n)_{n=0}^{\infty}$ defined by $z_n\defeq f_0^{n}(z_0)\in U_n$ is called the \emph{orbit} of $z_0$. As in~\eqref{eq:distortionseq}, we define the sequence of hyperbolic distortions along the orbit of $z_0$ by
\[ \lambda_n(z_0) \defeq \lambda_n(z_0,F) \defeq \|\Deriv f_n(z_n)\|_{U_n}^{U_{n+1}}.\]
\end{defi}
In the case of a meromorphic function $f$ with an orbit $(U_n)_{n=0}^{\infty}$ of 
wandering domains, we have $f_n^{k}= f^{k}|_{U_n}$, and $(z_n)_{n=0}^{\infty}$ is the
orbit of $z_0$ in the usual sense.

It is often useful to lift the system $F$ to the universal coverings of 
the surfaces $U_n$. (See \cite[Lemmas 2.3 and 3.2]{Fer24}.)
\begin{lemma}[Preferred lifts]\label{lemma:lifts}
Let $F=(U_n,f_n)_{n=0}^{\infty}$ be a non-autonomous system on hyperbolic surfaces, and let
$z_0\in U_0$ with orbit $(z_n)_{n=0}^{\infty}$. 
  
There are sequences $(\pi_n)_{n=0}^{\infty}$ of universal covering maps
$\pi_n\colon \DD\to U_n$ and non-constant holomorphic functions 
$g_n\colon \DD\to\DD$ with the following properties for $n\geq 0$:
\begin{enumerate}[(i)]
 \item $\pi_n(0)=z_n$;\label{item:pi-image}
 \item $g_n(0)=0$;\label{item:gn-fixes-origin}
 \item $f_n\circ \pi_n = \pi_{n+1}\circ g_n$;\label{item:lift-relation}
 \item if $z_n$ is not a critical point of $f_n$, then $g_n'(0)>0$.\label{item:positive-derivative}
\end{enumerate}

We say that $G = (\DD,g_n)_{n=0}^{\infty}$, which is a non-autonomous system on $\DD$,
 is a \emph{preferred lift} of the
 system $F$ (via the universal coverings $(\pi_n)_{n=0}^{\infty}$). The system $G$ satisfies
 \begin{equation}\label{eqn:lift-distortion} g_n'(0) = \lambda_n(0,G) = \lambda_n(z_0,F) \end{equation}
 for all $n\geq 0$. 
\end{lemma}
\begin{proof}
The construction of the system $G$ proceeds recursively. Let $\pi_0\colon \DD\to U_0$ be any universal covering with $\pi_0(0)=z_0$.
   
Now suppose that $n\geq 0$, and that the maps $\pi_k$ have been defined for $k\leq n$ and that $g_k$ has been defined for $k<n$. Let $\tilde{\pi}\colon \DD\to U_n$ be a universal covering with $\pi(0)=z_n$, and let $\tilde{g}\colon \DD\to\DD$ be the corresponding lift of $f_n$; that is, $\tilde{g}(0)=0$ and $\tilde{g}\circ \pi_n = \tilde{\pi}\circ f_n$. By the Schwarz--Pick theorem and the
   chain rule, we have
     \[ \lvert (\tilde{g})'(0)\rvert = \| \Deriv \tilde{g}(0)\|_{\DD} = 
      \| \Deriv f_n(z_n)\rvert_{U_n}^{U_{n+1}}.\]
If $z_n$ is a critical point of $f_n$, set $g_n\defeq \tilde{g}$ and $\pi_{n+1}\defeq \tilde{\pi}$. Otherwise, postcompose $\tilde{g}$ with a rotation to obtain $g_n$ with $g_n'(0)>0$ and precompose $\tilde{\pi}$ with the inverse of this rotation to obtain $\pi_{n+1}$. 

This ensures properties~\ref{item:pi-image} to~\ref{item:positive-derivative} as well as~\eqref{eqn:lift-distortion} for $n$, and completes the recursive construction and the proof. 
\end{proof}

We will also use the notion of geometric convergence and some of its properties. In a way, it generalises the Hausdorff metric to closed subsets of topological spaces, and it can be -- for our purposes -- defined as follows\footnote{This is not the usual definition, but an equivalent characterisation; see \cite[Proposition E.1.2]{BP92}.}.
\begin{defi}\label{def:geomconvergence}
Let $X$ be a locally compact metrisable topological space, and let $\mathcal{C}(X)$ denote the set of all closed subsets of $X$. We say that a sequence 
$(C_n)_{n=0}^{\infty}\subset\mathcal{C}(X)$ of subsets \textit{converges geometrically} to $C\in \mathcal{C}(X)$ if and only if:
\begin{enumerate}[(i)]
    \item\label{item:limitelements} Any $x\in X$ for which there exists a subsequence $(n_k)_{k=0}^{\infty}$ and points $x_k\in C_{n_k}$ such that $x_k\to x$ as $k\to+\infty$ satisfies $x\in C$; and
    \item For any $x\in C$, there exists a sequence $(x_n)_{n=0}^{\infty}$ with $x_n\in C_n$ such that $x_n\to x$.\label{item:convpoints}
\end{enumerate}
\end{defi}

We will apply this definition in the case where $X=\Aut(\DD)$, the Lie group of conformal automorphisms of $\DD$. The topology on $\Aut(\DD)$ is that of uniform convergence; recall that the map $\Aut(\DD)\to \DD\times S^1; \gamma\mapsto (\gamma(0), \arg \gamma'(0))$ is 
a homeomorphism. In order to effectively use the geometric topology to our advantage, we need the following:
\begin{thm}[Geometric topology for closed subgroups~\cite{BP92}*{Lemma E.1.4}]\label{thm:geomcompact}
The subset $\mathcal{S}$ of all closed subgroups of $\Aut(\DD)$ is compact in the geometric topology.
\end{thm}

\begin{thm}[Continuous limit groups]\label{thm:continuouslimits}
Suppose that $(\Gamma_n)_{n=0}^{\infty}$ is a sequence of torsion-free Fuchsian groups that converges to a group $\Gamma\in \mathcal{S}$. Furthermore suppose that there is a sequence $(\gamma_n)_{n=0}^{\infty}$ with $\gamma_n\in\Gamma_n\setminus\{\id\}$ such that $\gamma_n\to\id$ as $n\to\infty$.

Then $\Gamma$ is a continuous one-parameter subgroup of $\Aut(\DD)$. More precisely, $\Gamma$ is conjugate either to the group of real translations on the upper half-plane, or to the group of real translations on a horizontal bi-infinite strip.
\end{thm}
\begin{proof}
By \cite[Theorem 7.1]{MT98}, the limiting group is Abelian. Since $\Gamma_n$ is torsion-free and discrete, the orbit of $\gamma_n$ is infinite. It follows easily that $\Gamma$ contains elements arbitrarily close to the identity and is therefore non-discrete. 
   
Two M\"obius transformations commute if and only if they have the same set of fixed points. Hence $\Gamma$ is either a one-parameter group of hyperbolic transformations, all sharing the same two fixed points on $\partial\DD$, or a one-parameter group of parabolic transformations, all sharing the same single fixed point of $\partial\DD$. By sending the two fixed points to the ends of a bi-infinite strip, or by sending the single fixed point to the boundary point of the upper half-plane at infinity, we obtain one of the two stated alternatives. 
\end{proof}

Finally, we discuss injectivity radii of hyperbolic Riemann surfaces.
\begin{defi}[Injectivity radius]\label{def:inj}
Let $X$ be a hyperbolic Riemann surface, and let $p\in X$. The \textit{injectivity radius} of $X$ at $p$ is
\[ \inj_X(p) \defeq \inf\{\ell_X(\gamma)/2\colon \text{$\gamma\subset X$ is a non-trivial loop through $p$}\}. \]
If $X$ is simply connected, we define $\inj_X(p)$ to be infinite.
\end{defi}
An equivalent way of stating Definition \ref{def:inj} (and one that better explains the name) is that $\inj(p)$ is the supremum of $r > 0$ such that a universal covering $\varphi\colon\DD\to X$, normalised so that $\varphi(0) = p$, is injective in the hyperbolic ball $D_\DD(0, r)$ (cf.\ \cite[Section 4.5]{Thu97}). We will use the following closely related characterisation. 

\begin{lemma}[Injectivity radii in terms of universal coverings]\label{lem:injformula}
Let $X$ be a hyperbolic Riemann surface, let $p\in X$, and let $\pi\colon\DD\to X$ be a universal covering map such that $\pi(0) = p$. Let $\tilde p\in \pi^{-1}(0)\setminus\{0\}$ 
 be a point such that
\[ d\defeq d_{\DD}(0, \tilde p) = \min\{d_{\DD}(0,\tilde p)\colon \tilde{p}\in\DD\setminus\{0\}, \pi(\tilde{p}) = p\}. \]
Then $\inj_X(p) = d/2$.
\end{lemma}
\begin{proof}
If $X$ is simply connected, both $\inj_X(p)$ and $d$ are (by convention) infinite, and so we can assume that $X$ is not simply connected. Join $0$ to $\tilde p$ by a geodesic arc $\gamma\in\DD$; then $\pi(\gamma)$ is 
a non-trivial closed curve through $p$ in $S$ (recall that any trivial loop in $S$ lifts to
a  trivial loop in $\DD$ under $\pi$). So, by definition, 
\[ \inj_X(p) \leq \frac{1}{2}\ell_X(\pi(\gamma)) = \frac{1}{2}\ell_\DD(\gamma) = \frac{d}{2}. \]
For the reverse inequality, let $\gamma\in X$ be some non-contractible closed curve passing through $p$. By the unique path-lifting property of $\pi$ \cite{Bre93}*{Theorem 3.3}, there
 is a lift $\tilde\gamma\subset \DD$ of $\gamma$ starting at $0$ and terminating at some 
 $\hat{p}\in\pi^{-1}(p)$ distinct from zero. We have 
\[ d \leq d_\DD(0, \hat{p}) \leq \ell_\DD(\tilde\gamma) = \ell_X(\gamma); \]
thus,
 \[ \inj_X(p) = \frac{1}{2}\inf_{\gamma} \ell_X(\gamma) \geq \frac{d}{2}.\qedhere \]
\end{proof}

 Definition~\ref{def:thickthin} generalises directly to a non-autonomous system $F=(U_n,f_n)_{n=0}^{\infty}$ as in Definition~\ref{defn:nonautonomous}, by setting
  \begin{equation} \label{eqn:deltan_nonaut} \delta_n(z) = \inj_{U_n}(f_0^n(z)) \end{equation} 
  for $n\geq 0$ and $z\in U_0$.

\section{Uniformity at the infinitesimal level}\label{sec:uniform}
In this section, we prove Propositions \ref{prop:trich} and \ref{prop:injglobal}, restated below in more general forms as Propositions~\ref{prop:trich_nonaut} and~\ref{prop:injglobal_nonaut}. In the proof of
Proposition \ref{prop:trich_nonaut}, we use the following formula for the hyperbolic distance between points in the unit disc (see \cite[p. 13]{BM06}). 
\begin{lemma}[Hyperbolic distances]\label{lem:distD}
Let $z, w\in\DD$ be such that $0 \leq z \leq w < 1$. Then
\[ d_\DD(z, w) = \log\left(\frac{1 + w}{1 - w}\cdot\frac{1 - z}{1 + z}\right). \]
\end{lemma}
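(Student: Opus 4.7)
The plan is to compute $d_\DD(z,w)$ by integrating the hyperbolic density along the real segment joining $z$ and $w$. Recall that the hyperbolic metric on $\DD$ is given by the density $\rho_\DD(\zeta) = \frac{2}{1-|\zeta|^2}$. The first step would be to identify the relevant geodesic. Since complex conjugation $\zeta \mapsto \bar\zeta$ is an (orientation-reversing) isometry of $\DD$ fixing the real axis pointwise, any geodesic joining two points of $(-1,1)$ must be invariant under this reflection; by uniqueness of hyperbolic geodesics it follows that the geodesic from $z$ to $w$ is precisely the real segment $[z,w]$.

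Once this is established, the distance is obtained by direct integration:
\[
  d_\DD(z,w) \;=\; \int_z^w \frac{2\,dt}{1-t^2} \;=\; \int_z^w\!\left(\frac{1}{1-t}+\frac{1}{1+t}\right)\!dt \;=\; \left[\log\frac{1+t}{1-t}\right]_z^w,
\]
which upon evaluation equals
\[
  \log\frac{1+w}{1-w}-\log\frac{1+z}{1-z} \;=\; \log\!\left(\frac{1+w}{1-w}\cdot\frac{1-z}{1+z}\right),
\]
as claimed.

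There is no serious obstacle here; this is a classical one-variable computation, and the only ingredients required are the standard form of $\rho_\DD$ and the uniqueness of hyperbolic geodesics, both of which are covered in the reference \cite{BM06}. If one preferred to bypass the appeal to geodesics, an entirely equivalent route would be to apply the Möbius automorphism $\phi(\zeta) = \frac{\zeta - z}{1-z\zeta}$ of $\DD$, which sends $z$ to $0$ and $w$ to $\phi(w)=\frac{w-z}{1-zw}\in[0,1)$, then invoke the special case $d_\DD(0,s) = \log\frac{1+s}{1-s}$ for $s\in[0,1)$, and finally verify by a short algebraic simplification that $\frac{1+\phi(w)}{1-\phi(w)} = \frac{(1+w)(1-z)}{(1-w)(1+z)}$. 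Either approach yields the stated identity in a few lines.
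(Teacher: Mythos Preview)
Your proof is correct; this is precisely the standard computation. Note that the paper does not actually give a proof of this lemma at all---it simply states the formula and cites \cite[p.~13]{BM06}---so your argument supplies more detail than the paper itself, but what you have written is exactly the derivation one finds in that reference.
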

Using Lemma \ref{lem:distD}, we obtain the following bounds for the hyperbolic distortion:

\begin{lemma}[Hyperbolic distortion and hyperbolic distance]\label{lem:boundHD}
Let $g\colon X\to Y$ be a holomorphic map between hyperbolic Riemann surfaces. For $p\in X$, let $\lambda(p) \defeq \|\Deriv g(p)\|_X^Y$. Then, for any $z, w\in X$,
\[ e^{-2d_X(z, w)}(1 - \lambda(w)) \leq 1 - \lambda(z) \leq e^{2d_X(z, w)}(1 - \lambda(w)). \]
\end{lemma}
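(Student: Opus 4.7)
The plan is to reduce to a holomorphic self-map of $\DD$ and then combine two applications of the Schwarz--Pick lemma via a suitable auxiliary function.

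First, I lift $g$ through universal coverings $\pi_X\colon \DD \to X$ and $\pi_Y\colon \DD \to Y$ to a holomorphic self-map $\tilde g\colon \DD \to \DD$ with $\pi_Y \circ \tilde g = g \circ \pi_X$. Since the covers are local hyperbolic isometries, $\lambda_{\tilde g}(\tilde z) = \lambda_g(\pi_X(\tilde z))$ at any preimage. Given $z, w \in X$, I choose preimages $\tilde z \in \pi_X^{-1}(z)$ and $\tilde w \in \pi_X^{-1}(w)$ realising $d_\DD(\tilde z, \tilde w) = d_X(z, w)$, so it suffices to prove the estimate on the disc pair $(\tilde z, \tilde w)$. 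Pre- and post-composing $\tilde g$ with disc automorphisms (which preserve both $\lambda$ and the hyperbolic distance) then lets me further assume $\tilde z = 0$, $\tilde g(0) = 0$, and $\tilde w = w \in [0, 1)$. By Lemma \ref{lem:distD}, $e^{2 d_\DD(0, w)} = ((1+w)/(1-w))^2$, and it is enough to prove the one-sided inequality with $0$ on the left; the other direction will follow by swapping the roles of the two points.

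For the key estimate, I write $\tilde g(z) = z h(z)$ with $h\colon \DD \to \DD$ holomorphic (Schwarz's lemma), so $h(0) = \tilde g'(0)$. From $\tilde g'(w) = h(w) + w h'(w)$ and the Schwarz--Pick bound $|h'(w)| \leq (1-|h(w)|^2)/(1-w^2)$, together with the algebraic factorisation $(1-w^2)t + w(1-t^2) = (t+w)(1-tw)$, I obtain
\[ \lambda_{\tilde g}(w) \leq \frac{|h(w)| + w}{1 + |h(w)|\,w}, \qquad \text{so} \qquad 1 - \lambda_{\tilde g}(w) \geq \frac{(1-|h(w)|)(1-w)}{1 + |h(w)|\,w}. \]

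A second application of Schwarz--Pick to $h$, this time at $0$ and $w$, controls $|h(w)|$. Since $|h(0)| = \lambda_{\tilde g}(0)$, the bound $d_\DD(h(w), h(0)) \leq d_\DD(w, 0)$ forces $|h(w)| \leq (\lambda_{\tilde g}(0) + w)/(1 + \lambda_{\tilde g}(0) w)$ (the maximum modulus on the relevant closed hyperbolic ball, attained along the radius through $h(0)$), whence $1 - |h(w)| \geq (1 - \lambda_{\tilde g}(0))(1-w)/(1 + \lambda_{\tilde g}(0) w)$. Feeding this into the previous bound and absorbing the factors $1 + \lambda_{\tilde g}(0) w$ and $1 + |h(w)|\,w$ by the crude upper bound $1+w$ gives $1 - \lambda_{\tilde g}(w) \geq (1 - \lambda_{\tilde g}(0))(1-w)^2/(1+w)^2$, which is precisely what is required. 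I expect the main obstacle to be identifying the right auxiliary function: once $h(z) = \tilde g(z)/z$ is in hand, the two Schwarz--Pick inputs (one at each of the two points) slot together via the factorisation $(t+w)(1-tw)$, but without this specific choice the estimate would not close cleanly.
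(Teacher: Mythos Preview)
Your argument is correct, but it proceeds along a genuinely different route from the paper's.

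The paper's proof is a two-line application of a result of Beardon and Minda (\cite[Theorem~11.2]{BM06}), which says that the hyperbolic distortion, viewed as a map $\lambda\colon X\to\DD$, is $2$-Lipschitz for the hyperbolic metrics: $d_\DD(\lambda(z),\lambda(w))\leq 2d_X(z,w)$. Combining this with the elementary identity of Lemma~\ref{lem:distD} and the trivial bound $(1+\lambda(z))/(1+\lambda(w))\leq 1$ (under the harmless assumption $\lambda(z)\leq\lambda(w)$) immediately gives the upper inequality; symmetry gives the lower one.

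What you do instead is, in effect, prove the needed instance of Beardon--Minda from scratch. After the reduction to a self-map of $\DD$ fixing the origin, you factor $\tilde g(z)=zh(z)$ via the Schwarz lemma and then apply the classical Schwarz--Pick inequality \emph{twice} to $h$: once to control $|h'(w)|$, and once to control $|h(w)|$ in terms of $|h(0)|=\lambda_{\tilde g}(0)$. The algebraic identity $(1-w^2)t+w(1-t^2)=(t+w)(1-tw)$ then closes the estimate. This is self-contained and avoids citing an external theorem, at the cost of a longer computation; the paper's version is shorter but relies on the cited black box. Both approaches ultimately trade on the same phenomenon (the distortion varies in a controlled way), just packaged differently.
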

\begin{proof}
First, if $\lambda(z) = 1$, then by the Schwarz--Pick lemma $g$ is a covering map, and so $\lambda(w) = 1$ for every $w$ in $X$. Thus, in this case, the inequality is trivial.

Assume, for now, that $\lambda(z) \leq \lambda(w) < 1$. Applying Lemma \ref{lem:distD} gives us
\[ \frac{1 + \lambda(w)}{1 - \lambda(w)}\cdot\frac{1 - \lambda(z)}{1 + \lambda(z)} = e^{d_\DD(\lambda(z), \lambda(w))}. \]
Therefore,
\[ 1 - \lambda(z) = e^{d_\DD(\lambda(z), \lambda(w))}(1 - \lambda(w))\frac{1 + \lambda(z)}{1 + \lambda(w)} \leq e^{d_\DD(\lambda(z), \lambda(w))}(1 - \lambda(w)), \]
where the inequality follows from the fact that $0 \leq \lambda(z) \leq \lambda(w) < 1$.

By Beardon's Schwarz--Pick theorem for hyperbolic distortions (Theorem~\ref{thm:schwarzpickdistortion}),
\[ 1 - \lambda(z) \leq e^{2d_X(z, w)}(1 - \lambda(w)). \]
Reversing the roles of $z$ and $w$ in the proof gives the lower bound, and the proof is complete.
\end{proof}

We now prove Proposition \ref{prop:trich}, in the following more general form
for the setting of non-autonomous systems.

\begin{prop}\label{prop:trich_nonaut}
Let $F=(U_n, f_n)_{n=0}^{\infty}$ be a non-autonomous system on hyperbolic
 surfaces. Then, for all $n\geq 0$ and any $z, w\in U_0$,
\[ e^{-2d_{U_0}(z,w)} (1 - \lambda_n(w)) \leq 1 - \lambda_n(z) \leq 
e^{2d_{U_0}(z,w)}(1 - \lambda_n(w)). \]
In particular, whether the series
\[ \sum_{n=0}^{\infty} (1 - \lambda_n(z)) \]
converges, diverges, or has a finite number of non-zero terms does not depend on the choice of $z\in U_0$.
\end{prop}
\begin{proof}Let $n\geq 0$ and $z, w\in U_0$. Write $z_n \defeq f_0^n(z)$ and
$w_n\defeq f_0^n(w)$. By Lemma \ref{lem:boundHD},
\[ e^{-2d_{U_{n}}(z_n,w_n)}(1 - \lambda_n(w)) \leq 1 - \lambda_n(z) \leq e^{2d_{U_{n}}(z_n,w_n)}(1 - \lambda_n(w)), \]
while by Theorem~\ref{thm:SP} $d_{U_{n}}(z_n,w_n) \leq d_{U_0}(z, w)$.
\end{proof}

Next, we prove Proposition \ref{prop:injglobal}, again restated here in more
general form. 
\begin{prop}\label{prop:injglobal_nonaut}
Let $F=(U_n, f_n)_{n=0}^{\infty}$ be a non-autonomous system on hyperbolic
 surfaces. Let $z\in U_0$. 
If $(n_k)_{k=0}^{\infty}$ is a sequence such that $\delta_{n_k}(z)\to 0$, then $\delta_{n_k}(w)\to 0$ for all $w\in U_0$. 
(Recall from~\eqref{eqn:deltan_nonaut} that $\delta_n(w) \defeq \inj_{U_n}(f_0^n(w))$ for
$n\geq 0$ and $w\in U_0$.)
\end{prop}
\begin{proof}
Let $(\DD,g_n)_{n=0}^{\infty}$ be a preferred lift of the system $F$ via universal 
covering maps $\pi_n\colon \DD\to U_n$.
For $n\geq 0$, set 
$z_n\defeq f_0^n(z)$ and let
 $\tilde z_n\in\DD$ be a point of
 $\pi_n^{-1}(z)$ closest to the origin, i.e.\ so that
\[ |\tilde z_n| = \inf\{\lvert w\rvert\colon w\in\DD\setminus\{0\}, \pi_n(w) = z\}. \]
For each $n\geq 0$, we find a deck transformation $\gamma_n\in\Aut(\DD)$ so that $\pi_n(0) = \tilde z_n$. By Lemma \ref{lem:injformula}, the hypothesis $\inj_{U_{n_k}}(z_{n_k})\to 0$ implies that $\tilde z_{n_k}\to 0$ as $k\to+\infty$, and we claim that this implies that the sequence $(\gamma_{n_k})_{k\in\mathbb{N}}$ converges locally uniformly to the identity as $k\to+\infty$. First, since the autormorphisms of $\DD$ can be parametrised by their image and derivative at zero, we see (since $\tilde z_{n_k} = \gamma_{n_k}(0)\to 0$) that the sequence $(\gamma_{n_k})_{k=0}^{\infty}$ is precompact in $\Aut(\DD)$, and that any of its limit functions is an automorphism of $\DD$ fixing the origin. Since no $\gamma_{n_k}$ has fixed points in $\DD$, it follows from Hurwitz's theorem that the only possible limit function is the identity. Thus $\gamma_{n_k}\to \id$ as $k\to+\infty$, uniformly in the Euclidean metric and locally uniformly in the hyperbolic metric of $\DD$.

Now let $w\in U_0$ and set $w_n\defeq f_0^n(w)$
  for $n\geq 0$. Then, for each $n$,
  there exists $\tilde w_n\in\varphi^{-1}(w_n)$ such that
\[ d_\DD(0, \tilde w_n) = d_{U_n}(w_n, z_n) \leq d_{U_0}(w, z), \]
where the inequality comes from the Schwarz--Pick lemma. By the definition of the injectivity radius,
\[ \inj_{U_{n_k}}(w_{n_k}) \leq \frac{1}{2}d_\DD\left(\tilde w_{n_k}, \gamma_{n_k}(\tilde w_{n_k})\right). \]
The conclusion now follows from the fact that $\gamma_{n_k}\to \id$ as $k\to+\infty$.
\end{proof}

\section{Essential thinness, large annuli, and contracting foliations}\label{sec:global}
Throughout this and the next section, we fix a non-autonomous system \[F=(U_n,f_n)_{n=0}^{\infty}\] of holomorphic maps $f_n\colon U_n\to U_{n+1}$ between hyperbolic Riemann surfaces (in the sense of Definition~\ref{defn:nonautonomous}). We also fix an orbit $(z_n)_{n=0}^{\infty}$ and a corresponding preferred lift 
$G=(\DD,g_n)_{n=0}^{\infty}$ of $F$ as in Lemma~\ref{lemma:lifts}, via universal 
coverings $(\pi_n)_{n=0}^{\infty}$. For each $n\geq 0$, let $\Gamma_n\leq \Aut(\DD)$ be the group of
 deck transformations of $\pi_n$. We begin with a general observation.
 
\begin{lemma}[Limit functions]\label{lem:limitfunctions}
  For every $k\geq 0$, as $n\to \infty$ the functions 
    $g_k^n\colon \DD\to \DD$ converge locally uniformly to a holomorphic function
    $G_k\colon \DD\to\DD$ with $G_k(0)=0$.
    Furthermore, $G_k = G_{k+1}\circ g_k$ for all $k\geq 0$.
    
    The systems $F$ and $G$ are 
    infinitesimally eventually isometric if and only if $G_k=\id$
    for all sufficiently large $k$. They are infinitesimally semi-contracting if and only if $G_k\to\id$
     locally uniformly but $G_k\neq \id$ for all $k$. The systems are 
     infinitesimally 
     contracting if and only if $G_k(z)=0$ for all $k\geq 0$ and all $z\in\DD$;
     in this case, they are globally 
     contracting.
 \end{lemma}
 \begin{proof}
The family $\mathcal{G}$ of all holomorphic self-maps of $\DD$ with $g(0)=0$ is compact with respect to the topology of locally uniform convergence. Indeed, the family is normal by (the weak version of) Montel's theorem~\cite{Ahl79}*{Theorem 5.15}, and any limit function is again holomorphic by Weierstrass's theorem~\cite{Ahl79}*{Theorem 5.1}. 

  Let $\mathcal{G}_k\subset\mathcal{G}$ denote the set of all limit functions of 
    $g_k^n$ as $n\to\infty$. Then, an element $G\in\mathcal{G}$ belongs to
    $\mathcal{G}_{k+1}$ if and only if 
    $G \circ g_k\in \mathcal{G}_k$. By the Schwarz--Pick theorem,
    for every $z\in\DD$
    the hyperbolic distortion $\|\Deriv g_k^n(z)\|_{\DD}$ 
    is non-increasing
    as $n\to\infty$. If these distortions converge locally uniformly to $0$ as 
    $n\to\infty$, then every 
    $\mathcal{G}_k$ contains only the constant function $z\mapsto 0$, and 
    the systems $F$ and $G$ are 
    infinitesimally and globally contracting. 

 Otherwise, let $K\subset \DD$ be a closed disc centred at $0$. Then     \begin{equation}\label{eqn:derivatives_tend_to_1} \sup\{ \|\Deriv G(z)\|_{\DD}\colon G\in \mathcal{G}_k, z\in K\} \to 1 \end{equation}
   as $k\to\infty$. Suppose that 
   $(G_k)_{k=0}^{\infty}$
   is a sequence with $G_k\in \mathcal{G}_k$ for $k\geq 0$. Then it follows from~\eqref{eqn:derivatives_tend_to_1}
   that any limit function $\phi$ of 
   $G_k$ as $k\to\infty$ is a conformal isomorphism with $\phi(0)=0$;
   since $G_k'(0)\in [0,\infty)$, we must
   have $\phi = \id$. 
   
   So, for any $r>0$ and all
     sufficiently large $j$, 
     all functions in $\mathcal{G}_j$ 
     are close to the identity
     on the hyperbolic disc $D_{\DD}(0,r)$. 
     Let $k>0$. Any two limit functions $G,H\in \mathcal{G}_k$ 
     can be written as $G=G_n\circ g_k^n$
     and $H=H_n\circ g_k^n$ for all
     $n\geq k$ and some
     $G_n,H_n\in \mathcal{G}_n$. 
     Since $G_n\to \id$ and $H_n\to \id$
     as $n\to\infty$,
     we conclude that $G=H$. 
     (Recall that $g_k^n(D_{\DD}(0,r))\subset
     D_{\DD}(0,r)$.)
     
  We have proved that $\mathcal{G}_n$ contains
   a single element $G_n$, and hence
   $g_k^n\to G_k$ locally uniformly as
   $n\to\infty$. We have also shown that
   $G_k\to\id$ locally uniformly.
   Recall that $F$ and $G$
     are infinitesimally 
     eventually isometric if and only if
     there is $k_0$ such that
     $g_k = \id$ for all $k\geq k_0$,
     and hence $G_k = \id$ for $k\geq k_0$. 
     Conversely, if $g_{k_0}\neq \id$ for
     some $k_0$, then $G_k\neq \id$ for $k\leq k_0$, since 
        \[ \|\Deriv G_k(z)\|_{\DD}\leq 
     \|\Deriv g_{k_0}(g_k^{k_0-k}(z))\|_{\DD} < 1\]
     for all $z\in\DD$. Hence, if
     $g_{k}\neq \id$ for infinitely many
     $k$, then $G_k\neq \id$ for all $k\geq 0$. This completes the proof.
 \end{proof}

We note the following consequence of the preceding lemma, which is essentially \cite[Theorem 1.1(a)]{Fer24}, and also follows by appplying~\cite[Theorem 2.1]{BEFRS19} to the system $G$.
\begin{cor}[Infinitesimally contracting systems are globally contracting]\label{cor:contracting}
A non-autonomus system $F$ of holomorphic maps between hyperbolic Riemann surfaces is infinitesimally contracting if and only if it is globally contracting.     
 \end{cor}

We now restrict to the case where $F$ is non-contracting, so that the limit
 functions $G_k$ are non-constant, and $G_k\to\id$ as $k\to\infty$. 
  
\begin{lemma}[Limit group]\label{lem:homomorphisms}
Suppose that the system $F$ is infinitesimally non-contracting. Then the groups $\Gamma_n$ converge geometrically to a torsion-free group $\Gamma\in \mathcal{S}$. Moreover, for every $k\geq 0$, there are group homomorphisms $\psi_k\colon \Gamma_k\to \Gamma_{k+1}$ and $\Psi_k\colon \Gamma_k\to \Gamma$  such that $g_k\circ \gamma = \psi_k(\gamma)\circ g_k$ and $G_k\circ \gamma = \Psi_k(\gamma)\circ G_k$ for all $\gamma\in\Gamma_k$. 

Either $\Gamma=\{\id\}$ and 
   $\delta_n(z_0)=\inj_{U_n}(z_n)\to \infty$,
   or the sequence 
   $\delta_n(z_0)$ is eventually finite
   and non-increasing. The group $\Gamma$ is discrete if and only if $F$ is
   essentially thick (i.e., if and only
   if $\delta_n(z_0) \not\to 0$). 
    Otherwise, $\Gamma$ is a continuous one-parameter
     group as in Theorem~\ref{thm:continuouslimits}. 
     
  If $F$ is infinitesimally eventually isometric, then $\psi_k=\Psi_k=\id$ for all sufficiently large $k$.
 \end{lemma}
 \begin{proof}
The existence of the group homomorphisms $\psi_k$ is a consequence of the basic properties of covering maps; see~\cite{Aba23}*{Proposition 1.6.14}. For $k\geq 0$ and $n\geq 0$, define $\psi_k^{n}\defeq \psi_{k+n-1}\circ \dots \circ \psi_k\colon \Gamma_k \to \Gamma_{k+n}$. Then $g_k^n\circ \gamma = \psi_k^n(\gamma)\circ g_k^n$. Since $G$ is infinitesimally non-contracting, we have $G_k^n\to \id$ as $k\to\infty$. Since $(g_k^n)'(0) \geq (G_k^n)'(0)  \to 1$ for all $n\geq k$, we have $g_k^n \to \id$ uniformly in $n\geq k$ as $k\to\infty$. 
We conclude that $\psi_k^n(\gamma)$ is uniformly close to $\gamma$ when $k$ is large, independently of $n$. It follows that the sequence $(\psi_k^n(\gamma))_{n=0}^{\infty}$ is Cauchy for every $k$, and hence converges to an element $\Psi_k(\gamma)$ of $\Aut(\DD)$ as $n\to\infty$. Moreover, this convergence is uniform in the following sense: If $K\subset \Aut(\DD)$ is compact, then $\Psi_k(\gamma)$ is uniformly close to $\gamma$ for all sufficiently large $k$ and all $\gamma\in K\cap \Gamma_k$.

Observe that $\Psi_k(\Gamma_k) = \Psi_{k+1}(\psi_k(\Gamma_k))\subset \Psi_{k+1}(\Gamma_{k+1})$ for all $k\geq 0$. Let $\Gamma$ be the closure of the increasing union of all $\Psi_k(\Gamma_k)$. We claim that $\Gamma_k\to \Gamma$. Indeed, let $\gamma\in\Gamma$. Then
   there is a sequence $(\gamma_k)_{k=0}^{\infty}$ such that
   $\Psi_k(\gamma_k)\to \gamma$.
   Moreover, each $\Psi_k(\gamma_k)$ is the limit of
   $\psi_k^j(\gamma_k)\in \Gamma_{k+j}$
   as $j\to\infty$. 
   It follows that 
   $\psi_{k(n)}^{n-k(n)}(\gamma_{k(n)})
   \to \gamma$ as $n\to\infty$ if
   $k(n)\to\infty$ sufficiently slowly. 
   This establishes condition~\ref{item:convpoints}
   of Definition~\ref{def:geomconvergence}. 
   On the other hand,
    suppose that a sequence $(\gamma_{n_k})_{k=0}^{\infty}$ with
    $\gamma_{n_k}\in \Gamma_{n_k}$ 
    is given such that
   $\gamma_{n_k}\to \gamma\in\Aut(\DD)$ as $k\to\infty$. Then
   $K = \{\gamma_{n_k}\colon k\geq 0\}
   \cup \{\gamma\}\subset \Aut(\DD)$ is 
   compact. By the above
   uniformity statement, we have 
   $\Psi_{n_k}(\gamma_{n_k})\to \gamma$,
   and hence $\gamma\in\Gamma$. This
   establishes~\ref{item:limitelements}
   of Definition~\ref{def:geomconvergence}.
   Hence $\Gamma_k\to \Gamma$, as claimed.

 Next, recall that 
  \begin{align*} 2\delta_k(z_0) = 
    2\inj_{U_k}(z_k) &= 
    \min\{d_{\DD}(0,z) \colon z\in\DD^*, \pi_k(z)=z_k\} \\ &= 
    \min\{ d_{\DD}(0,\gamma(0))\colon 
      \gamma\in \Gamma_k\setminus\{\id\} \}\end{align*}
 by Lemma~\ref{lem:injformula}. 
 Let $R>0$ and let
 $K$ be the closed hyperbolic disc
 of radius $R$ centred at $0$. There is
 $k_0$ such that $g_k$ is injective on $K$
 for all $k\geq k_0$. Suppose that 
 $2\delta_k(z_0)<R$ for some $k\geq k_0$, 
 and let $\gamma\in\Gamma_k\setminus\{\id\}$
 such that $d_{\DD}(0,\gamma(0))$ is minimal.
 Then 
  \[ \psi_k(\gamma)(0) =
      \psi_k(\gamma)(g_k(0)) =
      g_k(\gamma(0)).\]
 By the Schwarz--Pick theorem,
  \[ 2\delta_{k+1}(z_0) \leq  
      d_{\DD}(0,g_k(\gamma(0))) 
      \leq d_{\DD}(0,\gamma(0)) =
      2\delta_k(z_0). \]
 We conclude that either $\delta_k(z_0)\to\infty$, in which
  case  $\Gamma = \{\id\}$,
  or the sequence $\delta_k(z_0)$ is
  eventually non-increasing. 

 In particular, $F$ is essentially thin if and only if $\delta_k(z_0)\to 0$. In this case, $\Gamma$ is a a continuous one-parameter subgroup of $\Aut(\DD)$ by Theorem~\ref{thm:continuouslimits}.

Otherwise, $F$ is essentially thick, $\id$ is an isolated element of $\Gamma$, and hence $\Gamma$ is discrete.

If $F$ is infinitesimally eventually isometric, then there is $k_0$ such that $f_k$ is a covering map, and hence $g_k = \id$, for all $k\geq k_0$. It follows that $\psi_k=\Psi_k=\id$ for $k\geq k_0$.
 \end{proof}

\begin{cor}\label{cor:doublyconn}
If the system $F$ is infinitesimally eventually isometric and essentially thin, then, for all sufficiently large $n$, $U_n$ is doubly connected and $f_n\colon U_n\to U_{n+1}$ is a finite-degree covering map.
\end{cor}
\begin{proof}
    For sufficiently large $n$, $\Psi_n=\id$, and hence $\Gamma_n$ is a nontrivial discrete subgroup of the limit group $\Gamma$, which is isomorphic to $\RR$. Hence $\Gamma_n$ is isomorphic to $\mathbb{Z}$, and $U_n$ is doubly connected. 

That $f_n$ is a covering map is a simple consequence of the Schwarz--Pick theorem. Its degree is given by the index of $(f_n)_*(\pi_1(U_n))\simeq \mathbb{Z}$ as a subgroup of $\pi_1(U_{n+1})\simeq \mathbb{Z}$, which must be finite.
\end{proof}

Let us now assume that the system $F$ is essentially thin, so that the limit group $\Gamma$ is a continuous one-parameter group, conjugate to the group of translational isometries of either a finite horizontal strip (the \emph{hyperbolic} case) or of the upper half-plane (the \emph{parabolic} case). Let $\limitF$ be the  foliation of $\DD$ by the orbits of $\Gamma$. In the parabolic case $\limitF$ consists of the horocycles based at the unique common fixed point of the elements of $\Gamma$, while in the hyperbolic case it consists of arcs of circles connecting the two common fixed points of the elements of $\Gamma$. Also, let $\mathcal{L}$ be the foliation of $\DD$ perpendicular to $\limitF$. In the parabolic case, its leaves are the geodesics connecting the fixed point of $\Gamma$ to other points of $\partial\DD$. In the hyperbolic case, the leaves are the geodesics of $\DD$ perpendicular to the unique geodesic connecting the two fixed points of $\Gamma$.

\begin{thm}\label{thm:foliations}
For every $k\geq 0$, the pull-back  $\discF_k$ of $\limitF$ under $G_k$ is an analytic singular foliation that is invariant under $\Gamma_k$. Hence the image of $\discF_k$ under  $\pi_k$ is an analytic singular foliation $\downF_k$ of $U_k$, and $f_k$ maps $\discF_k$ to $\discF_{k+1}$. 

 Similarly, the pull-back $\discL_k$
  of $\mathcal{L}$ is an analytic
  singular foliation of $\DD$, and its
  image $\downL_k$ under
  $\pi_k$ is an analytic singular foliation
  of $U_k$.

  The foliations 
  $\downF_k$ 
  and $\downL_k$ depend
  only on the system $F$, not on the choice
  of $z_0$ or $G$. 
\end{thm}
\begin{proof}
  The pull-back of any analytic singular
  foliation under an analytic function
  is again an analytic singular foliation.

  The foliations $\limitF$ and $\mathcal{L}$ are invariant under
  $\Gamma$. Hence, if $\gamma\in\Gamma_k$,
  then $\Psi_k(\gamma)$ maps leaves 
  of either foliation to leaves, 
  and it follows
  that $\gamma$ maps leaves of 
  $\discF_k$ and $\discL_k$ to
  leaves. This property
  ensures that 
  $\downF_k$ and $\downL_k$ are well-defined
  singular foliations on $U_k$. 

That the foliations of $U_k$ depend only on the system $F$ follows easily from the construction, since different choices of $G$ and $z_0$ would yield foliations $\discF_k'$ and $\discL_k'$ conjugate to $\discF_k$ and $\discL_k$ respectively.
\end{proof}

\begin{thm}\label{thm:dynfoliations}
Let $z,w\in U_0$ be such that $z_n\defeq f_0^n(z)\neq f_0^n(w)\eqdef w_n$ for all $n\geq 0$. Set $d_n(z,w)\defeq d_{U_n}(z_n,w_n)\neq 0$.

Then $\lim_{n\to\infty} d_n(z,w) = 0$ if and only if $z_n$ and $w_n$ belong to the same leaf of $\downF_{n}$ for sufficiently large $n$. 

If $F$ and $G$ are infinitesimally eventually isometric, the sequence $(d_n(z,w))_{n=0}^{\infty}$ is eventually constant if and only if $z_n$ and $w_n$ belong to the same leaf of $\discL_n$ for sufficiently large $n$.

  In all other cases,
   $d_n(z_n,w_n)$ tends 
    to a positive limit as $n\to\infty$. 
\end{thm}
\begin{proof}
   Let $\zeta_0$ and $\omega_0$ be 
    preimages of $z$ and $w$ under
    $\pi_0$, and set 
    $\zeta_n\defeq g_0^n(\zeta)$, 
    $\omega_n\defeq g_0^n(\omega)$.
    Then 
    $d_n(z,w)=d_{\DD}(\Gamma_n(\zeta_n),
    \Gamma_n(\omega_n))$. Recall
    that the sequence $d_n(z,w)$ is
    non-increasing by the Schwarz--Pick
    theorem. 

  As $n\to\infty$, 
  $\Gamma_n(\zeta_n)$ converges to the 
  orbit of $G_0(\zeta)$ under $\Gamma$;
  i.e., to the 
  leaf of $\limitF$ containing $G_0(\zeta)$.
  The same holds for $\omega$. So
  $d_n(z,w)$ converges to
  the hyperbolic distance between
  the leaf containing $G_0(\zeta)$
  and that containing $G_0(\omega)$. 

 It follows that $d_n(z,w)\to 0$ if and
  only if $G_0(\zeta)$ and $G_0(\omega)$
  belong to the same $\Gamma$-orbit.
  On the other hand, two  points on two
  different 
  $\Gamma$-orbits realise the hyperbolic
  distance between these orbits
  if and only if they belong to the
  same leaf of $\mathcal{L}$ (recall that $\mathcal{L}$ is the foliation orthogonal to $\mathcal{F}$). So 
  $d_n(z,w)$ is eventually constant if 
  and only if $G_0(\zeta)$ and $G_0(\omega)$ belong
  to the same leaf of $\mathcal{L}$. In all
  other cases, the sequence converges 
  nontrivially to its non-zero limit. 
\end{proof}

The existence of large annuli in $U_n$, for sufficiently large $n$, is a consequence of the following result. 

\begin{thm}[Annuli] \label{thm:annuli}
Let $K\subset U_0$ be compact and connected with non-empty interior, and not
contained in a leaf of $\downF_0$. Then there is $n_0=n_0(K)\geq 0$ such that,
for every $n\geq n_0$, all leaves of $\downF_n$ intersecting $f_0^n(K)$ are simple closed curves. The union $A_n(K)\subset U_n$ of all leaves of $\downF_n$ intersecting $f_0^n(K)$ forms a closed annulus with $\Mod A_n(K)\to\infty$ as $n\to\infty$, and $f_n\colon A_n(K)\to A_{n+1}(K)$ is a covering map for every $n\geq n_0$. 
\end{thm}
\begin{proof} 
Let $K\subset U_0$ be as in the statement, and choose $R > \diam_{U_0}(K \cup \{\pi_0(0)\})> 0$. Then $K\subset \pi_0(D_R)$, where $D_R \defeq D_\DD(0, R)$. By the Schwarz--Pick theorem, $f_0^n(K)\subset \pi_n(D_R)$ for all $k\geq 0$. 
 
 Since the limit group $\Gamma$ is continuous, there is a sequence $(\gamma_n)_{n=n_0}^{\infty}$ with $\gamma_n\in\Gamma_n\setminus \{\id\}$ such that $\gamma_n\to\id$ as $n\to\infty$. We may assume that $n_0$ is chosen sufficiently large
 to ensure that $\gamma_n(D_R)\subset D_{R+1}$ for $n\geq n_0$.
 Set $\gamma_n^* \defeq \Psi_n(\gamma_n)$. Recall that
$G_n \circ \gamma_n = \gamma_n^*\circ G_n$, and in particular
$\gamma_n^*(0) = G_n(\gamma_n(0))$. Since 
$G_n\to \id$ locally uniformly on $\DD$, for sufficiently large $n$, 
$G_n$ is injective on $D_{R+1}$. 
In particular, $G_n(\gamma_n(0))\neq G_n(0)=0$ for sufficiently large $n$, and
hence $\gamma_n^*\neq \id$. 

Now let $n$ be so large that, for every $z\in D_R$, the arc $\alpha^*(z)$ of the leaf of $\discF$ 
 connecting $G_n(z)$ and $G_n(\gamma_n(z)) = \gamma_n^*(G_n(z))$ is 
 contained in $G_n(D_{R+1})$. Its preimage $\alpha(z)$ under $G_n|_{D_{R+1}}$ is an
 analytic arc on a leaf of the foliation $\discF_n$ connecting 
 $z$ and $\gamma_n(z)$. It follows that this leaf does not contain any singularity of the foliation, and that
 the projection
 $\pi_n(\alpha(z))$ is a simple closed curve in $U_n$ (possibly covered several times by $\alpha(z)$), which is 
 a leaf of $\downF_n$. By choice of $R$, this proves the first claim. 
 
 Since $K$ is connected, the union $A_n(K)$ of all the leaves of $\downF_n$ intersecting $f_0^n(K)$ is connected. Since $K$ is not contained in a single leaf, this union is a closed annulus bounded by two closed leaves of $\downF_n$.
 That the modulus of this annulus tends to infinity follows easily from the fact that $\gamma_n\to \id$. Moreover,
 let $\hat{A}_n(K)$ denote the union of all leaves of $\discF_n$ that intersect $D_R\cap \pi_n^{-1}(A_n(K))$. It follows from the above that $\pi_n\colon \hat{A}_n(K)\to A_n(K)$ is a universal covering, and that $g_n$ maps 
 $\hat{A}_n(K)$ injectively onto $\hat{A}_{n+1}(K)$. This implies that $f_n\colon A_n(K)\to A_{n+1}(K)$ is a finite-degree covering map of annuli.
\end{proof}
  
We now prove Theorems \ref{thm:EI} and \ref{thm:semi-contracting}. 
\begin{proof}[Proof of Theorems~\ref{thm:EI} and~\ref{thm:semi-contracting}]
First, to prove Theorem \ref{thm:EI}, assume that $F$ is infinitesimally eventually isometric and essentially thin. Then, by Corollary \ref{cor:doublyconn}, $U_n$ is doubly connected for all large $n$, and by Theorem \ref{thm:annuli}, $\Mod U_n\to\infty$ as $n\to\infty$. The existence and dynamical properties of the foliations are guaranteed by Theorems \ref{thm:foliations} and \ref{thm:dynfoliations}, respectively.

 Now let us prove Theorem~\ref{thm:semi-contracting}. That $\delta_n(z)\to 0$ follows from Lemma~\ref{lem:homomorphisms}. 
 To construct the annuli $A_n$, let $(K_j)_{j=0}^{\infty}$ be an increasing sequence of compact and connected subsets of
  $U_0$ such that $U_0 = \bigcup K_n$ and such that $K_0$ is not contained in a single leaf of $\downF_0$.
  (For example, let $K_j$ be the hyperbolic disc in $U_0$ of radius $1+j$ around a base point $z_0\in U$.)
  
  We may choose a non-decreasing subsequence $(j_n)_{n=N}^{\infty}$,
  where $N=n_0(K_0)$ and $j_n\to\infty$ as $n\to\infty$, such that $n\geq n_0(K_{j_n})$ for all $n\geq N$.
  (Here $n_0(K)$ is as in Theorem~\ref{thm:annuli}.) We can now define the necessary annuli as $A_n \defeq A_n(K_{j_n})$. The stated
  properties of $A_n$ follow from Theorem~\ref{thm:annuli}, and the existence of the attracting foliation 
  and its properties are again given by Theorems \ref{thm:foliations} and \ref{thm:dynfoliations}. 
\end{proof}

\begin{rmk}
 When the system $F$ arises from a wandering domain of a transcendental entire or meromorphic function $f$ (or even a general Ahlfors islands map), the parabolic case cannot arise for eventually isometric domains. Indeed, otherwise the wandering domain $U_n$ would be a punctured disc for sufficiently large $n$, but this is impossible since the Julia set of $f$ has no isolated points.
 \end{rmk}

\newcommand{\GO}{\operatorname{GO}}

We conclude the section by discussing grand orbit classes. Let 
 $z\in U_0$. The \emph{grand orbit $\GO_0(z)=\GO(z,F)$ of $z$ in $U_0$ under the system $F$} consists of all points 
   $w\in U_0$ for which there exists $k\geq 0$ such that $f_n^k(z)=f_n^k(w)$. The following
     shows that grand orbits for a non-contracting system are
     either all discrete or all indiscrete, depending on whether the system is
     essentially thick or essentially thin. Moreover, these grand orbits are always nowhere dense. As mentioned
     in the introduction, neither of these facts is true for contracting systems.

\begin{prop}\label{prop:GOrelations}
 Suppose that $F$ is infinitesimally eventually isometric or infinitesimally 
 semi-contracting, and let $z\in U_0$. Then 
    \begin{equation}\label{eqn:grandorbits} \cl_{U_0}(\GO_0(z)) \defeq \overline{\GO_0(z)}\cap U_0 = \pi_0( G_0^{-1}( \Gamma(G_0(\zeta))),
    \end{equation}
where $\zeta\in \pi_0^{-1}(z)$.

In particular, if $F$ is essentially thick, then the grand orbit
 $\GO_0(z)$ is discrete in $U_0$.  
 If $F$ is essentially thin, then for every $z\in U_0$, the grand orbit $\GO_0(z)$ is not discrete but nowhere dense.
\end{prop}
\begin{rmk}
 The equality~\eqref{eqn:grandorbits} implies that $\cl_{U_0}(\GO_0(z))$ is locally a leaf of $\downF_0$ in the following sense. 
   If $w\in \cl_{U_0}(\GO_0(z))$ is a regular point of $\downF_0$, then 
   there is a neighbourhood $V\subset U_0$ of $w$ 
   such that $V \cap \cl_{U_0}(\GO_0(z)) = V\cap L(w)$, where $L(w)$ is the leaf of $U_0$ containing $w$.
   If $w$ is a singular point, then the same holds, except that $L(w)$ is the union of $w$ and the finitely many leaves ending at $w$.
\end{rmk}
\begin{proof}
 Let $z,w \in U_0$ and let $\zeta,\omega \in D$ be preimages under the universal covering map $\pi_0$. Then $w\in\GO_0(z)$ if and only if  there is $n\geq 0$ and $\gamma\in\Gamma_n$ such that $g_0^n(\zeta) = \gamma(g_0^n(\omega))$. It follows that 
  \[ \GO_0(z) = \pi_0\left(G_0^{-1}\left( \bigcup_{k=0}^{\infty} \Psi_k(\Gamma_k)(\zeta) \right)\right).\]
(The inclusion ``$\subseteq$'' is immediate from the above characterisation of $\GO_0(z)$ 
 and the properties of $\Psi_k$ and $G_0$. To show the inclusion ``$\supseteq$'', recall
that $G_k\to\id$ as $k\to\infty$. Hence, if $G_0(z)=G_0(w)$, then $G_k(z)=G_k(w)$ for sufficiently large $k$.)

 We have 
   \[ \cl_{\DD}\left(\bigcup_{k=0}^{\infty} \Psi_k(\Gamma_k)(G_0(\zeta))\right) = \Gamma(G_0(\zeta)). \]
   Since $G_0^{-1}(\Gamma(G_0(\zeta)))$ is invariant under $\Gamma_0$, its image under $\pi_0$ is closed, and 
   hence~\eqref{eqn:grandorbits} follows. 

   If $F$ is essentially thick, then $\Gamma$ is discrete, and $G_0^{-1}(\Gamma(G_0(\zeta)))$ is a discrete subset of $\DD$. Again, by invariance under
   $\Gamma_0$, its image under $\pi_0$ is also discrete. 
    If $F$ is essentially thin, then 
    $\Gamma(G_0(\zeta))$ is a leaf of the foliation $\discF$, and the same reasoning shows that $\cl_{U_0}(\GO_0(z))$ is non-discrete but nowhere dense.
\end{proof}

\section{Comparison with multiply connected domains of entire functions}\label{sec:entire}

In this section, we discuss our results in the context of the best understood multiply connected wandering domains, those of entire functions, while also generalising to other wandering domains with similar internal dynamics. Multiply connected wandering domains of entire functions were first constructed and studied by Baker. Already T\"opfer~\cite[p.~67]{toepfer1939} observed that any multiply connected
 Fatou component is necessarily escaping, while Baker~\cites{Bak63,Bak76} constructed the
 first examples of transcendental entire functions with
 multiply connected wandering domains. He also showed~\cite{Bak84}*{Theorem 3.1} that, for any
 such wandering domain $U$, the Fatou component $U_n$ containing $f^n(U_n)$ must
 separate $U_{n-1}$ from $\infty$ for large $n$. In particular, 
 all $U_n$ are bounded domains of the plane. Later, Kisaka and Shishikura \cite{KS08} constructed the
 first example of a doubly connected wandering domain and 
 showed that the eventual connectivity of a multiply connected wandering domain of a transcendental entire
 function is always well-defined, and is either two or infinity. 
 
 In terms of the internal dynamics of these wandering domains, however, the greatest advances were made by Bergweiler, Rippon, and Stallard \cite{BRS13}. They  showed that the dynamical behaviour of a multiply connected wandering domain $U$ of an entire function $f$ can be described using a dynamically defined sequence of harmonic functions $u_n\colon U_n\to(0, +\infty)$. 
 In~\cite[Theorem~1.2]{Fer22}, it was shown that the level sets of this harmonic function form
 the leaves of a contracting foliation. We begin by noting that the contracting foliations constructed
 in the previous section can also be described \textit{a posteriori} as level sets of
 harmonic functions. 

Throughout this section, $F = (f_n, U_n)_{n=0}^\infty$ is an essentially thin and non-contracting system (of which multiply connected wandering domains of entire functions are a special case). The notation and terminology are still the same as in Section \ref{sec:global}.

\begin{thm}[Harmonic functions for contracting foliations]\label{thm:harmonic}
 Let $F = (f_n, U_n)_{n=0}^\infty$ be an essentially thin and non-contracting system of
 holomorphic maps between hyperbolic Riemann surfaces.
 
 Then there exists a sequence $u_n\colon U_n\to (0,\infty)$ of positive harmonic functions such that $u_n = u_{n+1}\circ f_n$ for all $n$, and such that the foliation of $U_n$ by the level sets of $u_n$ is the contracting foliation of $U_n$.

  The function $u_n$ is unique up to post-composition by a real-affine map $\RR\to\RR$, and can be chosen such that its range is either $(0,1)$ (in the hyperbolic case) or $(0,\infty)$ (in the parabolic case).
\end{thm}
\begin{proof}
 We use the results and notation from Section~\ref{sec:global}. In particular, let
 $\limitF$ be the foliation of $\DD$ by the limit group $\Gamma$. Recall that, for $n\geq 0$,\ 
 the contracting foliation $\downF_n$ of $U_n$ is the push-forward under the projection $\pi_n$ of 
 $\discF_n = G_n^{-1}(\limitF)$, where $G_n\colon \DD\to \DD$ are the functions
 given by Lemma~\ref{lem:limitfunctions}. 
  
  Let $\phi$ be a conformal isomorphism from $\DD$ onto the upper half-plane
 (in the parabolic case) or the strip $\{a+ib\colon b\in (0,1)\}$ (in the hyperbolic case)
 that takes the leaves of the foliation $\limitF$ by orbits of the limit group
 $\Gamma$ to horizontal lines. Define
 $\tilde{u}_n\colon \DD\to (0,\infty); z\mapsto \im(\phi(G_n(z)))$. If
 $\gamma\in\Gamma_n$, then 
  \[ \tilde{u}_n(\gamma(z)) = \im(\phi (G_n(\gamma(z))) = \im(\phi (\Psi_n(\gamma)(G_n(z))). \]
  Since $\Psi_n(\gamma)\in \Gamma$, the points $G_n(z)$ and $\Psi_n(\gamma)(G_n(z))$ belong to the
  same leaf of $\limitF$, and hence
  \[ \tilde{u}_n(\gamma(z)) =
     \im(\phi(G_n(z))) =
          \tilde{u}_n(z). \]
  We conclude that
    \[ u_n\colon U_n\to (0,\infty); \quad u_n(\pi_n(z)) \defeq \tilde{u}_n(z) \]
    is a well-defined harmonic function on $U_n$ with the desired properties. 

 Two harmonic functions $u$ and $v$ on a plane domain have the same family of level sets if and only if they differ by post-composition with a real-affine map. Indeed, near a regular point, we may write $v(z)=\alpha(u(z))$ for a strictly monotone function
 $\alpha$. The chain rule, together with the fact that $u$ and $v$ are harmonic, implies that
 $\alpha'' = 0$. So $\alpha$ is affine, and by the identity theorem $v(z)=\alpha(u(z))$ on $U_n$. This proves
 the final statement.
\end{proof}

In the case of a multiply connected wandering domain $U_0$
 of an entire function $f$, in~\cite{BRS13} the harmonic function $u_0\colon U_0\to (0,\infty)$ 
  is obtained as a limit 
   \begin{equation}\label{eqn:BRSu} u_0(z) = \lim_{n\to\infty} \frac{\log \lvert f^n(z)\rvert}{\log \lvert f^n(z_0)\rvert},\end{equation}
   where $z_0\in U_0$ is a base point. The function takes values in an interval
   $(a,b)$ with $0\leq a < 1 < b \leq \infty$. The exact interval depends on the choice of
   the base point $z_0$; a change of base point changes the function $u_0$ by a positive
   multiplicative constant, and in particular the ratio $b/a\in (1,\infty]$ is 
   independent of $z_0$. 

 Now let $F=(f_n,U_n)_{n=0}^{\infty}$
  be a thin and non-contracting system as in the theorem; we continue to use the
  notation from Section~\ref{sec:global}. Let $z_0\in U_0$. 
  In order to compare the construction from~\cite{BRS13} 
  to the functions $u_n$ in Theorem~\ref{thm:harmonic}, let us suppose that
  each $U_n$ is a subset of the Riemann sphere. For simplicity, we may additionally assume that 
   \begin{enumerate}[(a)]
   \item $z_n=1$ for all $n\geq 0$;
   \item $U_n\subset \Chat\setminus\{0,\infty\}$;
   \item the leaf $\alpha_n(z_0)$ of $\downF_n$ containing $z_n$ separates
    $0$ from $\infty$ for sufficiently large  $n$; 
    \item if $u_0(z)>u_0(w)$, where $z,w\in U_0$, then for sufficiently large $n$, $\alpha_n(z)$ separates
    $\alpha_n(w)$ from $\infty$.\label{item:orientation}
   \end{enumerate}
 (These can be achieved by applying
  M\"obius transformations to the domains $U_n$, and post- and pre-composing $f_{n-1}$ and $f_n$ accordingly.)

 Let $r_n \in [0,1]$ be minimal such that 
    \[ \{z\in\Cx\colon r<\lvert z\rvert < 1 \} \subset U_n.\] 
   Our normalisation ensure that, for sufficiently large $n$, we have $0<r_n<1$.
   Moreover, for large $n$, $U_n$ contains
   an annulus of large modulus containing $z_n=1$ and separating $0$ from $\infty$. Hence $r_n\to 0$ as $n\to\infty$. Define 
    \[ h_n\colon U_0\to \RR; \quad  
        z\mapsto \frac{\log \lvert f_0^n(z)\rvert}{\lvert \log r_n\rvert}.\]

 \begin{prop}[Limit formula for $u_0$]\label{prop:harmonic}
    We have $h_n(z)\to u_0(z)/u_0(z_0) - 1$ locally uniformly as $n\to\infty$.
 \end{prop}
 \begin{proof}[Sketch of proof]
  Let $\gamma_n\in\Gamma_n$ be an element minimising $\lvert \gamma_n(0)\rvert$.
    Since  $\Gamma_n\to \Gamma$, 
    for large $n$, there are exactly two such choices, which are inverse to each other. 
    Which choice we take will be specified below. 

   If $\gamma_n$ is hyperbolic, let $\phi_n$ be a conformal isomorphism from $\DD$ to 
    a strip $V_n=\{z\in\Cx\colon -1 < \im z < b_n\}$, with $b_n>0$, such that $\phi_n(0)=0$ and
    such that 
    $\gamma_n$ is conjugate, via $\phi$, to a translation $\tau_n\colon z\mapsto z + t_n$ by a positive real number $t_n$. 
    (To achieve this, map the two fixed points of $\gamma_n$ to the two ends of a horizontal strip,
    and post-compose by a real-affine map to achieve the desired normalisation.) 
    Replacing $\gamma_n$ by $\gamma_n^{-1}$, if necessary, we can ensure additionally that the
    image of the real axis under $\pi_n\circ \phi_n^{-1}$, which is a simple closed curve in 
    $U_n$ separating $0$ from $\infty$, is oriented in positive orientation. 

 If $\gamma_n$ is parabolic\footnote{As remarked by Beardon and Maskit \cite{BM74}*{Lemma 1} (see also \cite{Oht54}*{Theorem 2}), parabolic deck transformations correspond to isolated punctures of the Riemann surface. Hence
 the group $\Gamma_n$ never contains parabolic elements when $U_n$ is a  wandering domain of a meromorphic function,
 or even an Ahlfors islands map.}, we similarly let $\phi_n$ be a conformal isomorphism from
  $\DD$ to the half-plane  
       $\{z\in\Cx\colon \im z > -1 \}$, mapping the unique fixed point of $\gamma_n$
       to $\infty$ and fixing $0$. Then $\gamma_n$ corresponds to a real translation
       $\tau_n$;
       replacing $\gamma_n$ by $\gamma_n^{-1}$, we may assume that the  translation constant
       $t_n$ is positive. 
       Condition~\ref{item:orientation} ensures that (for sufficiently large $n$),
       again the image of the real axis under $\pi_n\circ \phi_n^{-1}$ is positively oriented.

  It is not difficult to show that $t_n\to 0$, and that the 
   foliations $\phi_n(\discF_n)$ converge to the horizontal foliation on a limiting
   strip or half-plane. Now, the domains $U_n$ contain a topological annulus of large modulus, which means (by a theorem of Teichm\"uller; see also \cite[Theorem 3.17]{AW2009}) that $U_n$ contains a \textit{round} annulus of comparable modulus for large $n$. The preimage of this round annulus occupies much of the domain $\phi_n(\DD)$, be it a strip or half-plane, and so the universal coverings $\pi_n\circ\phi^{-1}$ are increasingly close (on compact subsets) to $\zeta\mapsto \exp(|\log r|\cdot\zeta)$ (see also Hejhal's version of Carath\'eodory's kernel theorem \cite[Theorem 1]{Hej74}) as $n$ increases. Since the functions $h_n$ are defined in terms of logarithms, they converge as claimed.
 \end{proof}

 Our normalisation above has assumed prior knowledge of the foliations $\downF_n$ and the functions $u_n$. It is not difficult to restate Proposition~\ref{prop:harmonic} in such a way  as to avoid this; however, its
 proof would still rely on existence of the foliation. 
  In contrast, in the entire case the existence of the foliation is obtained as a consequence of the convergence of the functions.  

 As mentioned in the introduction, the proof of convergence in the entire case relies on properties of the maximum modulus, particularly
  Wiman--Valiron theory (which is available only for entire functions and certain meromorphic functions; see
  \cite{BRS08}). The ratio $b/a$ mentioned above is \emph{external}: it depends on the
  way in which the domains $U_n$ tend to infinity in the plane, rather than on their intrinsic geometry. Therefore no 
  analogue of this ratio exists in the general meromorphic setting.
  However, whether this ratio is finite or infinite is an \emph{internal} property: it distinguishes 
  whether the limit group $\Gamma$ is hyperbolic or parabolic. 

 Another difference in the entire case is that each $U_n$ has a distinguished \textit{outer boundary component} $\partial_{\out} U_n$: the component of $\partial U_n$ that separates $U_n$ from $\infty$ in $\Cx$. The entire function must map $\partial_{\out} U_n$ to $\partial_{\out} U_{n+1}$. Moreover, the outer boundary component is also distinguished by the internal dynamics: It is the only place near which the function $u_n$ may take values close to $b$; indeed, $u_n$ extends continuously to every $\zeta\in\partial U_n\setminus \partial_{\out} U_n$ with $u_n(\zeta)=a$~\cite[Theorem~1.6~(b)]{BRS13}. Moreover, the role of the outer boundary
 differs, depending on whether $b<\infty$ or $b=\infty$: In the former case, $u_n(z)$ is (up to rescaling by a real-affine map) exactly given by the harmonic measure of $\partial_{\out} U_n$ from $z$, while
 otherwise $\partial_{\out} U_n$ has zero harmonic measure~\cite[Theorem~1.6~(d)]{BRS13}.

 There is no distinguished outer boundary component in our more general setting. Nonetheless, some version of the 
 above results still hold for essentially thin non-con\-tracting systems, if we impose the additional assumption
 that the maps $f_n$ are \emph{proper maps}; that is,
 the preimage of a compact subset of $U_{n+1}$ 
 under $f_n$ is compact. This condition is always satisfied for orbits of bounded wandering domains of transcendental meromorphic functions. In particular, it holds for multiply connected wandering domains of
 entire functions.
 
We will use the notion of the set $E(X)$ of \emph{ends} of a surface $X$. In the case where $X$ is a subset of the sphere, the ends of $X$ are exactly the connected components of $\partial X$; for this reason, the ends of a surface are also called its \emph{boundary components}. (See~\cite[Definition~1]{richardsclassification}
or~\cite[Section~I.1]{Oht54}.) An \emph{end} $e$ of $X$ is determined by a choice 
  $e(K)$ of of a connected component of $X\setminus K$ for every compact subset
  $K\subset X$, with the property that $e(K_1)\subset e(K_2)$ when $K_2\subset K_1$. 
  
  The set of ends is non-empty if and only if $X$ is non-compact. 
  $X\cup E(X)$ is a compact connected topological (even metrisable)
  space; a neighbourhood base of $e\in E(X)$ is given by the domains
  $e(K)$. 
  A proper map $f\colon X\to Y$ between surfaces extends continuously
  to a map $f\colon E(X)\to E(Y)$: the image $\tilde{e}=f(e)\in E(Y)$ of
  $e\in E(X)$ is defined by defining $\tilde{e}(K)$ to be the connected
  component of $Y\setminus K$ containing $f(e(f^{-1}(K)))$.
 

 Now let $F=(f_n,U_n)_{n=0}^{\infty}$ be as a bove, and 
  define $\partial_{-} U_n$ to consist of all ends $e$ of $U_n$ for which there exists a curve 
  $\gamma$ tending to $e$ along which $u_n$ tends to $0$. Similarly, define $\partial_{+} U_n$ to
  consist of all ends $e$ of $U_n$ for which there is a curve $\gamma$ along which $u_n$ tends to 
  $1$ (in the hyperbolic case) or $\infty$ (in the parabolic case). Here $u_n$ is the function from Theorem~\ref{thm:harmonic}. Observe that, in the hyperbolic case, replacing $u_n$ by $1-u_n$ will exchange the roles of 
  $\partial_+ U_n$ and $\partial_- U_n$.
 
 \begin{prop}[Boundary components] Let $F = (f_n, U_n)_{n=0}^\infty$ be an essentially thin and non-contracting system of
 holomorphic maps between hyperbolic Riemann surfaces, and assume that all maps $f_n$ are proper.
 
   Then, for every $n\geq 0$, the sets $\partial_-U_n$ and $\partial_+ U_n$ are disjoint. Moreover,
   $f_n(\partial_- U_n) = \partial_-(U_{n+1})$ and $f_n(\partial_+ U_n) = \partial_+ U_{n+1}$.
 \end{prop}
 \begin{proof}
     The second claim follows  from the fact that $f$ is proper, together
     with the functional
     relation $u_n=u_{n+1}\circ f_n$. 
     
     To prove the first statement, we observe that all regular leaves of the contracting
     foliation are closed analytic curves. Indeed, every such leaf maps to a closed leaf by $f_n^k$ for sufficiently large $k$ by Theorem~\ref{thm:annuli}, and since $f_n^k$ is proper, the preimage of a simple closed curve is again
     a simple closed curve. 
     
     Next, we claim that, for every end $e$ of $U_n$ and every compact connected
     subset $K\subset U_n$, there is a simple closed
      leaf $\lambda_n(K)$ of the foliation that separates $e$ from $K$. This also follows from Theorem~\ref{thm:annuli}. 
     Indeed, $f_n^k(K)$ is eventually contained in an annulus in $U_{n+k}$ bounded by two simple closed leaves
     of the foliation $\downF_{n+k}$. We may choose these boundary leaves so that they do not contain critical values of $f_n^k$. The preimage of the annulus is compactly contained in $U_n$, and one of its finitely many
     boundary curves
     has the desired property. 

    Let $e\in \partial_- U_n$ and let $\gamma$ be a curve tending to $e$, starting at $z_n = f_0^n(z_0)$,
    such that $u_n\to 0$ along $\gamma$. For $\rho>0$, consider the
    hyperbolic disc
    $K_n(\rho)\defeq D_{U_n}(z_n,\rho)$. The curve $\gamma$ must intersect the leaf $\lambda_n(K_n(\rho))$ for all 
    $\rho>0$. Recall that $u_n$ takes a  constant value on $\lambda_n(K_n(\rho))$, which must tend to zero
    as $\rho\to \infty$. In particular, there can be no curve tending to $e$ along which
    $u_n\to b\in (0,\infty]$, and $e\notin \partial_+ U_n$.
 \end{proof}

Following~\cite{BRS13}, we now wish to discuss
 harmonic measure on the ends of $U_n$. Here the 
 \emph{harmonic measure} $\omega_X(z_0,E)$ 
  of a set $E\subset E(X)$ with respect to a point $z_0$ in a hyperbolic
  Riemann surface $X$ can be described as the
  probability that
  Brownian motion starting at $z_0$ converges to one
  of the ends $e\in E$. Equivalently, let $\pi\colon \DD\to X$ be a 
  universal covering map with $\pi(0)=z_0$. Then $\omega_X(z_0,E)$ is the
  normalised Lebesgue measure of the subset of $\alpha\in\partial\DD$
  for which $\pi(z)\to E$ as $z\to \alpha$ non-tangentially. 
  (See~\cite{Tsu75}*{Section X.2} for an introduction to
  harmonic measure on Riemann surfaces, and an alternative definition.)
  
  The harmonic measure of the boundary is 
  non-zero if and only if there exists 
  a Green's function on $X$~\cite[Theorem~X.1]{Tsu75}. 
  In the latter case
  we say, following Tsuji~\cite[Section~X.1]{Tsu75}, that
  $X$ has \emph{positive boundary}. (Often, the term
  \emph{non-polar boundary} is also used, particularly when
  $X\subset\Chat$.) This condition always holds for
  our surfaces $U_n$.

\begin{prop}[Positive boundaries]\label{prop:nonpolar}
    Let $F = (f_n, U_n)_{n=0}^\infty$ be an essentially thin and non-contracting system of
 holomorphic maps between hyperbolic Riemann surfaces. 
 Then every $U_n$ has positive boundary. 
 
 In particular, if $n\geq 0$ and 
 $\pi_n\colon \DD\to U_n$ is a universal covering
 map, then for almost every $\alpha\in\DD$, $\pi_n(z)$ tends to an end of
 $U_n$ as $z\to \alpha$ non-tangentially. 
\end{prop}
\begin{proof}
  Positive boundary, i.e., the existence of a Green's 
  function on $X$, is equivalent 
  to 
   the existence of a non-constant positive harmonic function
   on $X$;
   see~\cite[Theorem~X.7]{Tsu75}. This holds for
   $U_n$ by Theorem~\ref{thm:harmonic}.
   
 The second statement follows from the first in two more steps. First, by \cite{Tsu75}*{XI.22}, there exist full-measure subsets $E_n\subset\partial\DD$ such that, for $\zeta\in E_n$, $\pi_n(r\zeta)$ converges to some end of $U_n$ as $r\nearrow 1$. For universal coverings, having a radial limit at $\theta$ is well-known to be equivalent to having a non-tangential limit. Indeed, points in a Stolz angle at $\zeta$
 are a bounded hyperbolic distance away from the
 radial ray $\{r\zeta\colon r\in [0,1)\}$.
 By the Schwarz--Pick lemma, the same is true for
 their images under $\pi_n$.  We refer the reader to \cite{FJ25a}*{Proposition 4.2} for details.
\end{proof}


\begin{thm}[Harmonic measure of boundary components]
  Let $F = (f_n, U_n)_{n=0}^\infty$ be an essentially thin and non-contracting system of holomorphic maps between hyperbolic Riemann surfaces. Additionally assume that all maps $f_n$ are proper. 

 Then, for all $n\geq 0$ and all $z\in U_n$,
   \begin{equation}\label{eqn:harmonicmeasure} \omega_{U_n}\left(z, \partial_+ U_n \cup \partial_- U_n\right)=1. \end{equation}
   
 Moreover, if the limit group $\Gamma$ is hyperbolic, then 
   \begin{equation}\label{eqn:hyperbolicmeasure} u_n(z) = \omega_{U_n}\left( z, \partial_+ U_n\right) \end{equation}
   for all $z\in U_n$. If the group is parabolic, then 
   \begin{equation}\label{eqn:parabolicmeasure} \omega_{U_n}\left( z, \partial_+ U_n\right) = 0 \end{equation}
   for all $z\in U_n$.  
\end{thm}
\begin{proof}
Set $b \defeq 1$ in the hyperbolic case and $b\defeq \infty$ in the parabolic case. Since the functions $f_n$ are proper and each $U_n$ has 
positive boundary, the lifts $g_n\colon \DD\to\DD$ are inner functions; 
that is, for almost every $\theta\in\partial \DD$, the radial limit of $g_n$
(which exists by Fatou's theorem) is again on $\partial\DD$.

Indeed, let $\theta\in\DD$ be such that the radial limit of $g_n$ at $\theta$
exists. By Proposition~\ref{prop:nonpolar}, for almost every such $\theta$,
 $\pi_n(z)$ tends to an end of $U_n$ as $z\to\theta$ non-tangentially
in $\DD$. Since $f_n$
is proper, $f_n(\pi_n(z)) = \pi_{n+1}(g_n(z))$ tends to an end of $U_{n+1}$. This means
that the radial limit of $g_n$ at $\theta$ must indeed lie on the unit circle. (Compare~\cite{FJ25}*{Proposition 2.10}). 

Consequently, by \cite{Fer23}*{Theorem 1.1}, the functions $G_n$ given by Lemma \ref{lem:limitfunctions} are also inner. It follows that the radial limit of $G_n$ at almost every point $\theta\in\partial\DD$ exists,  belongs to $\partial\DD$, and is not a fixed point of the limit group 
$\Gamma$. In particular, by definition of the harmonic function $\tilde{u}_n$ in the proof of Theorem~\ref{thm:harmonic}, the radial limit of $\tilde{u}_n$ exists at almost every $\theta$ and is equal to $0$ or $b$. Moreover, in the parabolic case, this limit is equal to $0$ almost everywhere.

By Proposition~\ref{prop:nonpolar}, for almost every $\theta\in \partial\DD$, the image under $\pi_n$ of the radial line connecting $0$ and $\theta$ is a curve tending to an end of $U_n$. 
This proves~\eqref{eqn:harmonicmeasure}. In the parabolic case,~\eqref{eqn:parabolicmeasure} also follows.

In the hyperbolic case, let $C$ be the subset of $\partial \DD$ bounded by the two fixed points of $\Gamma$ such that $\tilde{u}_n(\zeta)\to 1$ if and only if $G_n(\zeta)\to C$. Then $\tilde{u}_n(\zeta) = \omega_{\DD}(G_n(\zeta),C)$ for all $\zeta\in\DD$. Loewner's lemma (see e.g. \cite{DM91}*{Corollary 1.5}) implies that  $G_n\colon\DD\to\DD$ and its boundary extension $G_n^*$ (defined almost everywhere via radial limits) preserves harmonic measure. That is,
    \[ \tilde{u}_n(\zeta) = \omega_{\DD}(G_n(\zeta),C) = \omega_{\DD}(\zeta,(G_n^*)^{-1}(C)). \] 
 
 As we saw above, $(G_n^*)^{-1}(C)$ and $(\pi_n^*)^{-1}\left(\bigcup \partial_+ U_n\right)$ agree up to a set
 of harmonic measure zero. Thus~\eqref{eqn:hyperbolicmeasure} follows.
\end{proof}
\begin{rmk}
    If the $U_n$ are subdomains of the sphere, the notion of
    harmonic measure of boundary components, described above,
    coincides with the usual notion of harmonic measure on the 
    boundary of $U_n$.
\end{rmk}

In the entire case, $\partial_+ U_n$ contains only the outer boundary component $\partial_{\out} U_n$, while all other boundary components are in $\partial_- U_n$. In the meromorphic case, both $\partial_+ U_n$ and $\partial_- U_n$ may consist of several boundary components. Moreover, it is plausible that there are examples
in which some ends of $U_n$ do not belong to either
$\partial_+ U_n$ or $\partial_- U_n$. For such an end $e$, the function $u$ would oscillate between values close to $0$ and values close to $b\in \{1,\infty\}$ along any curve approaching $e$.

\section{Eventual connectivity: The proof of Theorem~\ref{thm:main}}\label{sec:mainproof}


First, we restate Theorem \ref{thm:main} for non-autonomous systems.
\begin{thm}[Internal dynamics of non-autonomous systems on hyperbolic surfaces]\label{thm:main-nonautonomous} Let $F=(f_n,U_n)_{n=0}^{\infty}$ be a non-autonomous
  system of holomorphic maps between hyperbolic Riemann surfaces. Then
   one of the following holds.
  \begin{enumerate}[(1)]
    \item \label{item:contractingNA}
     $F$ is globally contracting.  
   \item $F$ is essentially thick and globally semi-contracting. Either
       $\delta_n(z)\to\infty$ for all $z\in U_0$, or for all $z\in U_0$, $\delta_n(z)$ is eventually non-decreasing and tends to
       a positive finite value as $n\to\infty$.\label{item:semicontractingNA}
    \item  $F$ is essentially thin, infinitesimally semi-contracting and bimodal. Furthermore,
          $\delta_n(z)\to 0$ for all $z\in U_0$.\label{item:bimodalNA}
    \item $F$ is essentially thick and globally eventually isometric. In particular, there is
    $n_0$ such that $\delta_n(z) = \delta_{n_0}(z)$ for all $n\geq n_0$ and all $z\in U_0$, and 
      $F$ has 
      finite or infinite eventual connectivity.\label{item:globallyisometricNA}
    \item $F$ is essentially thick and locally, but not globally, eventually isometric. 
      The sequence $\delta_n(z)$ is eventually constant for every $z\in U_0$, and $F$ has eventual connectivity $\infty$.\label{item:nongloballyisometricNA}
    \item $F$ is essentially thin, infinitesimally eventually isometric and trimodal. $F$ has eventual connectivity $2$ and $\delta_n(z)\to 0$ for all $z\in U$.\label{item:trimodalNA}
  \end{enumerate}
\end{thm}
\begin{proof}
Most of the above follows from what we have already proved, except the essentially thick and non-contracting cases.

We start by taking a preferred lift 
$G=(\DD, g_n)_{n=0}^\infty$ of $F$ via
universal covering maps $(\pi_n)_{n=0}^{\infty}$. Then, we claim that for any compact subset $K$ of $\Aut(\DD)$ there exists $N$ such that the group homomorphisms $\psi_n\colon\Gamma_n\to\Gamma_{n+1}$ given by Lemma \ref{lem:homomorphisms} yield bijections between $\Gamma_n\cap K$ and $\Gamma_{n+1}\cap K$ whenever $n\geq N$. Indeed, if that were not the case, then we would have $\gamma_{n_k}\in\Gamma_{n_k}\cap K$, $k\in\mathbb{N}$, such that $\gamma_{n_k}\notin \psi_{n_k-1}(\Gamma_{n_k-1})$. The sequence $\Psi_{n_k}(\gamma_{n_k})$ is now an infinite sequence of distinct elements of the discrete limit group $\Gamma$, all compactly contained in $\Aut(\DD)$ (in a compact set possibly larger than $K$), which is a contradiction. Combining this observation with Lemma \ref{lem:limitfunctions}, we see that for all $r > 0$ there exists $N$ such that, for all $n\geq N$, the function $g_n$ is injective in $D_\DD(0,  r)$ \textit{and} there exists a well-defined bijection between $\Gamma_n(z)\cap D_\DD(0, r)$ and $\Gamma_{n+1}(z)\cap D_\DD(0, r)$. Consequently, the commuting diagram $\pi_{n+1}\circ g_n = f_n\circ\pi_n$ now implies that $f_n|_{|\pi_n(D_\DD(0, r))}$ is injective for $n\geq N$. This means that, when restricted to compact sets, the system $F$ inherits its behaviour from the system $G$: if $G$ (and thus $F$) is infinitesimally semi-contracting (resp. eventually isometric), then $F$ is semi-contracting (resp. eventually isometric) on compact subsets. Being semi-contracting on compact subsets is the same as being globally semi-contracting, and we are left to prove the claims about the sequence $\delta_n$ and infinite connectivity in the infinitesimally eventually isometric case. In this case, $\Gamma_n$ injects into the (discrete) limit group $\Gamma$ for all sufficiently large $n$. If $\Gamma_n=\Gamma$ for some and hence all sufficiently large $n$, the system $F$ is globally eventually isometric, and the stated claims easily follow. Otherwise, $\delta_n(z)$ is eventually constant
    for every $z\in U_0$ by the characterisation of the injectivity radius 
    $\delta_n = \inj_{U_n}(z_n)$ in terms of $\Gamma_n$. That all sufficiently large $U_n$ must
    be infinitely connected follows from the following topological fact. If $f\colon U\to V$ is a covering map and $V$ is finitely connected, then one of the following hold:
    \begin{itemize}
        \item  $f$ is a conformal isomorphism;
     \item $U$ is simply connected and $f$ is a universal covering map;
     \item $U$ and $V$ are both doubly connected, and $f$ is a finite-degree covering map; 
     \item $U$ has greater connectivity than $V$.
    \end{itemize} 
    If $f$ has finite degree, this is a consequence of the Riemann--Hurwitz formula.
    It remains to consider the case where $f$ has infinite degree, $V$ has connectivity at least $3$, and 
    $U$ is not simply connected. 

    In this case, there is some homotopically non-trivial closed curve $\gamma$ in $V$ (not necessarily primitive in $\pi_1(V)$) that lifts to a loop in $U$ (since $U$ is not simply connected and $f$ provides an isomorphism from $\pi_1(U)$ to a subgroup of $\pi_1(V)$). Now, either $f^{-1}(\gamma)$ consists of infinitely many disjoint non-trivial loops in $U$, each mapped finitely many times onto $\gamma$, or there exists at least one component $\tilde\gamma$ of $f^{-1}(\gamma)$ that is not a loop, not compactly contained in $U$, and which maps onto $\gamma$ with infinite degree. If it is the former, we are done -- $U$ is already infinitely connected. If it is the latter, we consider another non-trivial loop $\sigma\subset V$, with $\sigma\cap\gamma = \{p\}$ and $\sigma$ and $\gamma$ not homotopic to each other. Consider a lift $\tilde\sigma$ of $\sigma$; it is a curve that starts and ends at points in $f^{-1}(p)$, of which (by hypothesis) only finitely many lie outside of $\tilde\gamma$. Since $f$ has infinite degree, this means that there are infinitely many instances of $f^{-1}(\sigma)$ intersecting $\tilde\gamma$. No such instance can be trivial or homotopic to $\tilde\gamma$ in $U$, for that would imply that $\sigma$ is trivial in $V$ or that $\sigma$ and $\gamma$ are homotopic. Therefore, again, $U$ is infinitely connected. 
\end{proof}

Before finally proving Theorem \ref{thm:main}, we recall the following fact about connectivities of wandering domains (see \cite{Zhe01}*{p. 219}).
\begin{lemma}[Connectivities of Fatou components]\label{lem:zheng}
Let $f$ be a transcendental meromorphic function with a wandering domain $U$. Then, one of the following holds:
\begin{enumerate}[(i)]
    \item All $U_n$, $n\geq 0$, are infinitely connected;
    \item There exists $k\in\mathbb{N}$, $k\geq 3$, such that $U$ has eventual connectivity $k$;
    \item For all large $k$, $c(U_n)\in\{1, 2\}$.
\end{enumerate}
\end{lemma}

\begin{proof}[Proof of Theorem~\ref{thm:main}]
We must prove the remaining statements on eventual connectivities. Let $U$ be a wandering domain of a transcendental meromorphic function. By~\cite[Theorem 1.1]{Fer22}, if $U$ is infinitesimally contracting or semi-contracting, then $U$ cannot have a finite eventual connectivity greater than $1$. Hence, in view of Theorem~\ref{thm:EI}, together with Lemma \ref{lem:zheng}, to prove Theorem~\ref{thm:main}, it only remains to show the following.
\begin{enumerate}[(a)]
    \item If $U$ is locally, but not globally, eventually isometric, then it is infinitely connected.\label{item:locallyisometric}
    \item If $U$ is bimodal, then it is infinitely connected.%
    \label{item:bimodalpf}
\end{enumerate}

For~\ref{item:locallyisometric}, notice that if $U_n$ is finitely connected and infinitesimally eventually isometric, only the following possibilities can occur:
\begin{enumerate}[(i)]
    \item $U_n$ is simply connected, $f\colon U_n\to U_{n+1}$ is a universal covering, and $c(U_{n+1}) \geq 1$. If equality holds, $f$ has degree one in $U_n$;
    \item $1 < c(U_n) < \infty $, and $c(U_{n+1}) \leq c(U_n)$. Indeed, this is an easy consequence of \cite{Bol99}*{Theorem 3}.
\end{enumerate}
If case (ii) holds for all sufficiently large $n$, then $U$ must have an eventual connectivity $k\geq 2$, whence it is globally eventually isometric by \cite{Fer22}*{Theorem 1.1}. On the other hand, if $c(U_n) = 1$ for all large $n$, then $U$ must once again be globally eventually isometric. Thus, $U$ must either be infinitely connected, or have no eventual connectivity -- which, by Lemma \ref{lem:zheng}, means having connectivity alternating between one and two. However, if $U$ is infinitesimally eventually isometric, then $f$ is a covering map from $U_n$ to $U_{n+1}$ for large $n$. This is impossible if $U_n$ is doubly 
connected and $U_{n+1}$ is simply connected (since otherwise
$f$ would induce
an isomorphism from $\pi_1(U_n)$ to a subgroup of the trivial group $\pi_1(U_{n+1})$.)

For~\ref{item:bimodalpf}, notice that, by Theorem \ref{thm:semi-contracting}, $U_n$ is already at least doubly connected for all sufficiently large $n$. By Lemma \ref{lem:zheng} and \cite{Fer22}*{Theorem 1.1}, $U$ and all its iterates must be infinitely connected.
\end{proof}

\section{Examples}\label{sec:ex}
Let us first consider the case of entire functions. Examples of simply connected globally contracting, globally semi-contracting and globally eventually isometric wandering domains are discussed in~\cite{BEFRS19}. An entire function having a wandering domain of eventual connectivity $\infty$ was first constructed by Baker~\cite{Bak63}. A wandering domain with eventual connectivity $2$ was constructed by Kisaka and Shishikura~\cite{KS08}. This covers all cases of Theorem~\ref{thm:main} that can be realised by entire functions, and in particular gives examples of cases~\ref{item:bimodal} and~\ref{item:trimodal} of Theorem~\ref{thm:main}.
  
Globally eventually isometric wandering domains of any eventual connectivity $\geq 2$ are constructed in~\cite{BKL90}; their existence also follows from Theorem~\ref{thm:magic} or~\cite[Theorem~1.3]{MartiPeteRempeWaterman}. 
     
Examples of essentially thick globally contracting or semi-contracting wandering domains with eventual connectivity $\infty$ are given in \cite[Example 1]{RS08} and \cite[Theorem 1.2]{Fer24}, respectively.
     
Examples of wandering domains without an eventual connectivity are constructed in \cite[Theorem 1.3]{Fer22}. It is possibly to verify 
that this example is essentially thick and infinitesimally contracting. 
We believe that the construction can be modified 
to ensure that this domain is infinitesimally semi-contracting, or so that it is infinitesimally contracting and essentially thin. (On the other hand, an infinitesimally semi-contracting domain without eventual connectivity must be essentially thick by Theorem \ref{thm:main}\ref{item:bimodal}). 

It remains to prove Theorem~\ref{thm:ex}. To do so, we construct a suitable model map to which we can apply Theorem~\ref{thm:magic}.

 \begin{prop}[A locally but not globally eventually isometric model]\label{prop:model}
   There exist sequences $(b_n)_{n=0}^{\infty}$
      of degree $2$ Blaschke products $\DD\to\DD$
     and $(U_n)_{n=0}^{\infty}$ of subdomains of $\DD$ with $0\in U_n$ for all $n\geq 0$ such that:
      \begin{enumerate}
        \item $b_n|_{U_n}\colon U_n\to U_{n+1}$ is a degree two covering map and $b_n(0)=0$ for all $n\geq 0$;
        \item there is an increasing sequence $(r_n)_{n=0}^{\infty}$ with $r_n\nearrow 1$ 
         such that  $b_n|_{D(0,r_n)}$ is injective, for all $n\geq 0$.
      \end{enumerate}
 \end{prop}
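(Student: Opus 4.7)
The plan is to give an explicit construction. Let $(c_n)_{n=0}^{\infty} \subset (0,1)$ be a sequence with $c_n \nearrow 1$ (to be specified later), and define
\[ b_n(z) \defeq z \cdot \frac{z - c_n}{1 - c_n z}. \]
Each $b_n$ is then a degree-$2$ Blaschke product $\DD \to \DD$ with $b_n(0) = 0$, and a direct calculation yields the unique critical point $z_n^* = (1 - \sqrt{1 - c_n^2})/c_n \in (0, c_n)$, satisfying $z_n^* \nearrow 1$; the corresponding critical value is $v_n = b_n(z_n^*) = -(z_n^*)^2$. Solving $b_n(z_1) = b_n(z_2)$ for $z_1 \neq z_2$ gives the M\"obius ``sheet-swapping'' involution $\psi_n(z) = (c_n - z)/(1 - c_n z)$ of $\DD$ (fixing $z_n^*$), and a short computation shows that $\psi_n(D(0, z_n^*))$ is the Euclidean disc externally tangent to $D(0, z_n^*)$ at $z_n^*$. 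Hence $b_n$ is injective on $D(0, z_n^*)$, and taking $r_n \defeq z_n^* - 1/n$ (or any $r_n < z_n^*$ with $r_n \nearrow 1$) gives property~(2).

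For property~(1), I would take $U_n \defeq \DD \setminus K_n$, where $K_n$ is a compact connected $\psi_n$-invariant set containing $z_n^*$, not containing $0$, and disjoint from $D(0, r_n)$. The $\psi_n$-invariance forces $b_n^{-1}(b_n(K_n)) = K_n$, so that $b_n: U_n \to U_{n+1} \defeq \DD \setminus b_n(K_n)$ is an unbranched degree-$2$ covering map with $b_n(0) = 0 \in U_{n+1}$. I would build the $K_n$ inductively as $\psi_n$-invariant hyperbolic geodesic arcs through $z_n^*$, with endpoints at $v_{n-1}$ and $\psi_n(v_{n-1})$, adjusting $c_n$ (and possibly introducing a phase factor in $b_n$) at each step so that $b_n(K_n)$ coincides with the analogous arc through $z_{n+1}^*$. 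The Schwarz--Pick lemma keeps the hyperbolic diameter of $K_n$ uniformly bounded, and since the hyperbolic and Euclidean metrics diverge near $\partial \DD$, the sets $K_n$ lie in an arbitrarily thin Euclidean neighbourhood of $\partial \DD$ for large $n$, ensuring disjointness from the growing Euclidean discs $D(0, r_n)$.

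The hard part will be the simultaneous $\psi_n$-invariance of successive $K_n$'s. Because $|v_n| = |z_n^*|^2 < |z_n^*|$, one cannot arrange $v_n = z_{n+1}^*$ together with $z_n^* \to 1$, so $K_{n+1}$ cannot simply be a small hyperbolic disc around $z_{n+1}^*$ but must accommodate both $v_n$ and $z_{n+1}^*$ (and $\psi_{n+1}(v_n)$) while respecting $\psi_{n+1}$-symmetry. Ensuring that the geometric matching can be carried out consistently across all $n$ -- i.e.\ that at each step there is a choice of $(c_n, \theta_n)$ making the forward image $b_n(K_n)$ have exactly the required shape -- is the principal technical input of the construction, and is where the explicit formulas for $b_n$ and $\psi_n$ from the first paragraph come into play.
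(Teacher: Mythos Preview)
Your choice of Blaschke products and your analysis of property~(2) are correct and essentially identical to the paper's (up to a sign in the parameter, which puts the critical point on the positive rather than the negative real axis). The difficulty is entirely in property~(1), and here your plan has a genuine gap.

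You want $U_n=\DD\setminus K_n$ with $K_n$ a single connected, $\psi_n$-invariant compact set, and you need $b_n(K_n)=K_{n+1}$ exactly. This forces the rigid matching condition that the \emph{image} $b_n(K_n)$ be $\psi_{n+1}$-invariant. Your concrete suggestion~-- take $K_n$ to be a $\psi_n$-symmetric hyperbolic geodesic arc through $z_n^*$~-- cannot work: $b_n$ is not a hyperbolic isometry, so $b_n(K_n)$ is not a geodesic arc, and no choice of $c_{n+1}$ or phase factor will make a non-geodesic curve coincide with a geodesic arc. Even relaxing to real segments fails: with your sign convention $v_n=-(z_n^*)^2<0$ while $z_{n+1}^*>0$, so any real interval containing both contains $0$; with the opposite sign $v_n=(z_n^*)^2<z_n^*$, forcing $z_{n+1}^*\le (z_n^*)^2<z_n^*$, which contradicts $z_n^*\nearrow 1$. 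More generally, ``$b_n(K_n)$ is invariant under a half-turn about some point $z_{n+1}^*$'' is a functional constraint on a fixed curve, and the two free real parameters $(c_{n+1},\theta_{n+1})$ you allow yourself cannot be expected to satisfy it. You recognise this as ``the hard part'', but you have not supplied an argument that it can be done, and the obstructions above indicate it cannot be done in the form you propose.

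The paper avoids this matching problem altogether by giving up on connectedness of the deleted set. It defines $U_n=\DD\setminus\bigcup_{m\ge n}A_n^m$, where at stage $m$ one simply \emph{adds} a new small closed disc near $\partial\DD$ to the set removed from $U_{m+1}$, and then pulls this back through $b_m,b_{m-1},\ldots$ to update the earlier $U_k$. No forward matching is required: $b_n^{-1}(U_{n+1})=U_n$ holds by construction, the critical point of $b_n$ is automatically deleted (so $b_n\colon U_n\to U_{n+1}$ is an unbranched degree-two cover), and the Schwarz lemma pushes all the new deleted pieces outside $D(0,r_n)$. The price is that each $U_n$ is infinitely connected rather than doubly connected, but this is irrelevant for the intended application.
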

 \begin{proof}
   For $0<a<1$, define
     \[ b_a(z) \defeq z\cdot\frac{z + a}{1 + az}. \]
   The maps $b_n$ in the proposition will be of the form $b_n = b_{a_n}$, where
   $a_n \nearrow 1$ is a suitable sequence of positive numbers.

   The functions $b_a\colon\DD\to\DD$ have degree two, fix the origin,
      and have a single critical point at
\[ c_a = \frac{-1 + \sqrt{1 - a^2}}{a}. \]
The critical point approaches $-1$ as $a\to 1$, as does its value $v_a \defeq b_a(c_a)$.
 Clearly $b_a$ converges to the identity as $a\to 1$; in particular,
 for any $r<1$, $b_a$ is injective on $D(0,r)$ for $a$ sufficiently close to $1$. In fact, it is easy to see that $b_a$ is injective on
 $D(0,\lvert c_a\rvert)$.

We now construct the numbers $a_n$ inductively. As part of the construction, we also define the increasing sequence $r_n$, as well as compact subsets   $A_k^n \subset \DD$, for $0\leq k\leq n+1$, for $n\geq 0$, each of which is a finite union of topological discs. These are chosen to have the following properties for all $n\geq 0$.
  \begin{enumerate}[(i)]
    \item $b_n$ is injective on $D(0,r_n)$.\label{item:injective}
    \item $0\notin A_k^n$ for all $k \leq n+1$ and
        $c_n\defeq c_{a_n} \in A_k^n$ for all $k\leq n$.\label{item:0andcritical}
    \item $b_k^{-1}(A_{k+1}^n) = A_{k}^n$ for $0\leq k\leq n$.\label{item:preimage}
    \item If $n\geq 1$, every connected component of $A_k^{n-1}$ is also
        a connected component of $A_k^{n}$ and
         $(A_k^{n+1}\setminus A_k^n)\cap D(0,r_n)=\emptyset$
        for $0\leq k \leq n$. \label{item:newdomainsnearboundary}
 \end{enumerate}

Suppose that $m\geq 0$ is such, for all $n<m$, the numbers $a_n$ and sets $A_k^n$ have been defined with the above properties.
    (This inductive hypothesis is empty
  for $m=0$.) If $m=0$, set $r_0 \defeq 1/2$; otherwise,
      choose $r_m\in (r_{m-1},1)$ such that $r_m > 1-1/(m+1)$ and 
     $A_m^{m-1}\subset D(0,r_m)$. Then choose $a_m$ close enough to $1$ to ensure
      that $c_m < -r_m$. With this choice, $b_m$ is injective on $D(0,r_m)$,
      so~\ref{item:injective} holds for $n=m$.

Since $c_m\notin D(0,r_m)$, we have $v_m\defeq b_m(c_m)\notin b_m(D(0,r_m))$ (note that $b_m^{-1}(v_m)=\{c_m\}$). Choose $\eps>0$ such that $D(v_m,\eps)\cap b_m(D(0,r_m)) = \emptyset$. Define
\begin{equation}\label{eqn:Am1}
          A_{m+1}^m \defeq \begin{cases}                 
          \overline{D(v_m,\eps)} \cup  b_m(A_m^{m-1}) &\text{if }m>0 \\
          \overline{D(v_m,\eps)} &\text{if }m=0\end{cases}
\end{equation}
and define $A_k^m$ recursively according to~\ref{item:preimage} for
     $k=m,m-1,\dots,0$.

    Let us consider the connected components of $A_m^m = g_m^{-1}(A_{m+1}^m)$.One such component is given by $b_m^{-1}(D(c_m,\eps))$, which is disjoint from $D(0,r_m)$ by our choice of $\eps$. Every other component  of $A_m^m$ is a connected component of $b_m^{-1}(b_m(A))$, where $A$ is a connected component of $A_m^{m-1}\subset D(0,r_m)$. Since $b_m$ is injective on $D(0, r_m)$, there are two such components: $A$ itself, and a second component $A' \subset \DD\setminus D(0,r_m)$. This establishes~\ref{item:newdomainsnearboundary} for $n=k=m$. For $k<m$~\ref{item:newdomainsnearboundary} follows inductively from~\ref{item:preimage} and the Schwarz lemma.

    Furthermore, since $b_m$ is injective on $D(0,r_m)\supset A_m^{m-1}$ and
    $0\notin A_m^{m-1}$ by the inductive hypothesis, we have $0\notin A_{m+1}^m$ by~\eqref{eqn:Am1}. 
    By~\ref{item:preimage}, we conclude inductively that $0\notin A_k^m$ for $k<m+1$
    (recall that all the functions $b_k$ fix $0$). Moreover, $c_m\in A_m^m$, and by the inductive hypothesis and~\ref{item:newdomainsnearboundary} we also have $c_k\in A_k^m$ for $k<m$. This proves~\ref{item:0andcritical} and completes the recursive construction.

    For $n\geq 0$,  we define
       \[ U_n \defeq \DD \setminus \bigcup_{k=n}^{\infty} A_n^k. \]
    Observe that, by~\ref{item:newdomainsnearboundary}, $U_n$ is a subdomain of $\DD$, and furthermore $0\in U_n$, $c_n\notin U_n$, and $b_n^{-1}(U_{n+1}) = U_n$. It follows that $b_n\colon U_n\to U_{n+1}$ is indeed a degree two covering map. The proof of the proposition is complete.
\end{proof}

\begin{proof}[Proof of Theorem~\ref{thm:ex}]
 Fix the sequences $(b_n)_{n=0}^{\infty}$ and $(U_n)_{n=0}^{\infty}$ from Proposition~\ref{prop:model} and  define
\[ T_n(z) \defeq z + 4n, \quad  X_n \defeq \overline{T_n(U_n)}\quad\text{and}\quad
     \phi(z) \defeq T_{n+1}( b_n((T_n)^{-1}(z)))\ \text{for $z\in X_n$}.\]

 Then $\phi$ is holomorphic on a neighbourhood of $X\defeq \bigcup_{j=0}^{\infty} X_n$.
By Theorem \ref{thm:magic}, there exists a meromorphic function $f\colon\Cx\to\Chat$
 and a diffeomorphism $\theta\colon\Cx\to\Cx$ that is conformal on $\intr(X)$ 
 such that 
$V_n\defeq \theta(T_n(U_n))$ is a
 Fatou component of $f$ for every $n$ and 
     $\theta\circ g = f\circ \theta$ on $X$.  
    It follows that $f\colon V_n\to \widetilde V_{n+1}$ is an unbranched covering of degree two for all $n\geq 0$. In particular, $(V_n)_{n=0}^{\infty}$ is an orbit
    of infinitesimally isometric wandering domains that is not globally eventually isometric.
    Since the $V_n$ are infinitely connected, it follows from Theorem~\ref{thm:main} that 
    this orbit of wandering domains is locally eventually isometric.
    \end{proof}

\begin{rmk}
    It is also not difficult to show the final claim in the proof
     (that the orbit of wandering domains is locally eventually isometric) directly, without referring to Theorem~\ref{thm:main}.
\end{rmk}

\bibliographystyle{amsalpha}
\bibliography{ref}

\end{document}